\newcommand*{\ind}{\mathbbm{1}}
\newtheorem{theorem}{Theorem}
\theoremstyle{plain}
\newtheorem{fact}[theorem]{Fact}
\newtheorem{definition}[theorem]{Definition}
\newtheorem{lemma}[theorem]{Lemma}
\newtheorem{proposition}[theorem]{Proposition}
\numberwithin{equation}{section}
\newcommand{\ien}{\underline{i}_n}
\newcommand{\jen}{\underline{j}_n}
\newcommand{\R}{\mathbb R}
\begin{document}

\parindent0pt

\title[Fractal percolations]
{Projections of fractal percolations}

\author{Micha\l\ Rams}
\address{Micha\l\ Rams, Institute of Mathematics, Polish Academy of Sciences, ul. \'Sniadeckich 8, 00-956 Warsaw, Poland
\tt{rams@impan.gov.pl}}

\author{K\'{a}roly Simon}
\address{K\'{a}roly Simon, Institute of Mathematics, Technical
University of Budapest, H-1529 B.O.box 91, Hungary
\tt{simonk@math.bme.hu}}

 \thanks{2000 {\em Mathematics Subject Classification.} Primary
28A80 Secondary 60J80, 60J85
\\ \indent
{\em Key words and phrases.} Random fractals,
 processes in random environment.\\
\indent The research of Rams was supported by the EU FP6 Marie
Curie program CODY and by the Polish MNiSW Grant NN201 0222 33
�Chaos, fraktale i dynamika konforemna�.
 The research of Simon was supported by OTKA Foundation
\#71693}

\begin{abstract}{In this paper we study the radial and orthogonal projections and the distance sets of the random Cantor sets $E\subset
\mathbb{R}^2 $ which are  called Mandelbrot percolation or
percolation fractals. We prove that the following assertion holds
almost surely: if the Hausdorff dimension of $E$ is greater than
$1$ then the orthogonal projection to \textbf{every} line,
the radial projection with \textbf{every} center, and distance set from \textbf{every} point contain intervals.}
\end{abstract}

\maketitle



\section{Introduction}

\begin{figure}[ht!]
  \includegraphics[width=8cm]{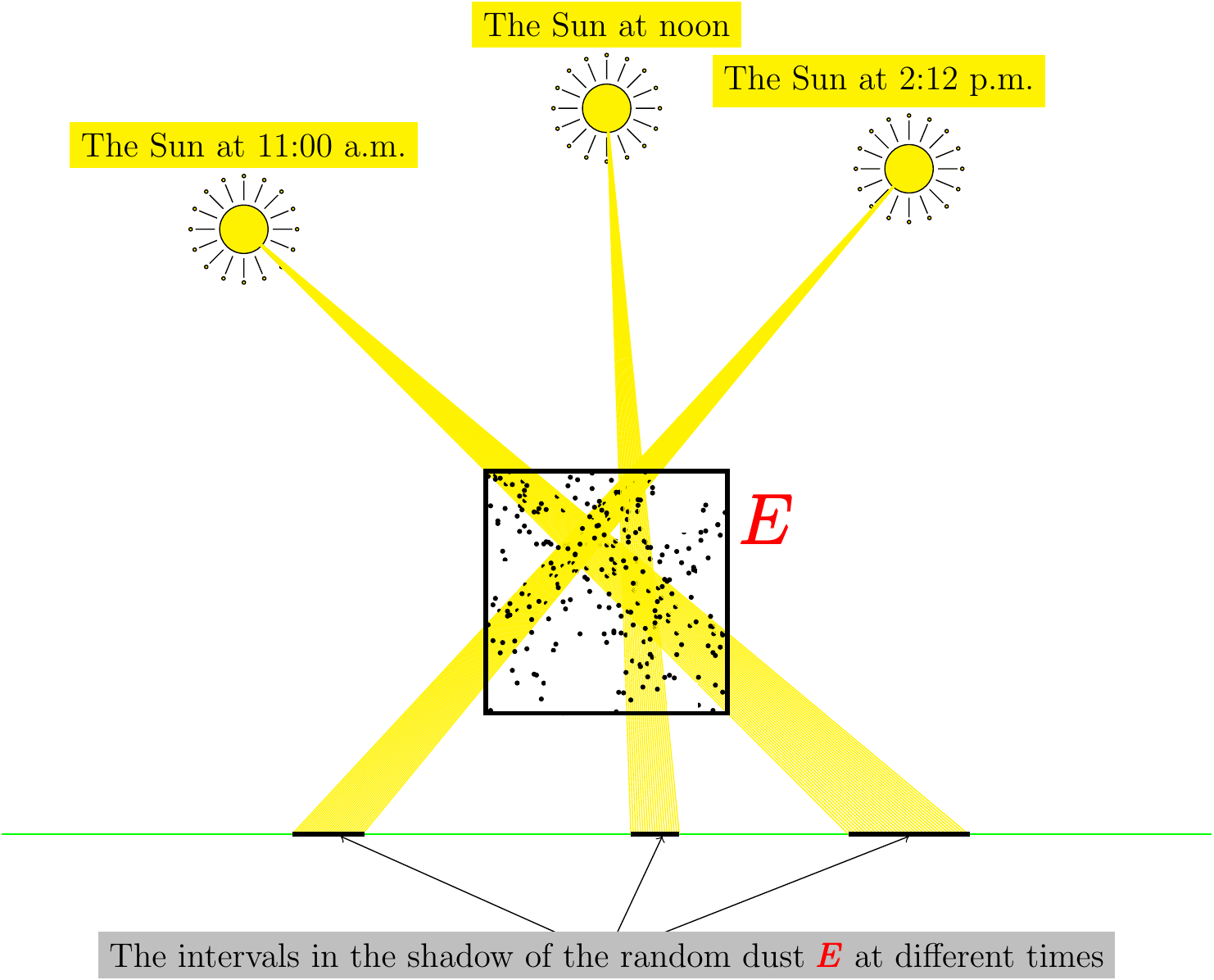}\\
\end{figure}

This picture shows what we prove: although the fractal percolation is a Cantor dust, it throws a thick shadow \textbf{at any time}. Here thick means containing at least one non-trivial open interval (we will write simply 'containing intervals'). One does not need to rotate it to use it as an umbrella.

\bigskip

In order to construct a model for turbulence Mandelbrot introduced
\cite{M} a random set which is now called Mandelbrot percolation
or fractal percolations or canonical curdling. In the simplest
case (we consider a more general case in this paper), we are given
a natural number $M\geq 2$ and a probability $p\in (0,1)$. First
we partition the unit square $[0,1]^2$ into $M^2$ congruent
squares and then we retain each of them with probability $p$ and
discard them with probability $1-p$ independently. In the squares
which were retained we repeat this process independently ad
infinitum. The random set $E\subset [0,1]^2$ that results is the
fractal percolation or canonical curdling. In fact in this paper
sometimes we consider the more general setup where the $M^2$
congruent squares, mentioned above, are chosen with not
necessarily the same probabilities.

\medskip

These random Cantor sets have attracted considerable attention. In
1978 Peyri\`ere
computed the almost sure Hausdorff dimension, conditioned on
non-extinction (this result was reproved many times). In 1988   Chayes, Chayes, Durett \cite{CCD}
proved that there is a critical probability $p_c$ such for every
$0<p<p_c$ the random Cantor set $E$ is totally disconnected, but
for every $p>p_c$ with positive probability $E$ percolates. This
means that there is a connected component in $E$ which connects
the left hand side wall to the right hand side wall of the unit
square $[0,1]^2$ with positive probability. Dekking and Meester \cite{DM} gave a simplified
proof for the previously mentioned result and defined several
phases such that as we increase $p$ the process passes through all
of these phases.
If the fractal is totally disconnected  ($p<p_c$) it still can happen that some of its projections contain intervals.

The orthogonal projections of fractals on the plane were already studied by Marstrand \cite{Ma}
 in 1954.
 Marstrand's Theorem
 says that for any set $A\subset \mathbb{R}^2$  with $\dim_{\rm H}A>1$ the orthogonal projection of $A$ to almost all lines has  positive Lebesgue measure; here $\dim_{\rm H}$ denotes the Hausdorff dimension.

Existence of an interval in the orthogonal projections of some Cantor sets in the plane was first studied in relation with the famous Palis conjecture about the algebraic difference of  Cantor sets.
The algebraic difference of the Cantor sets  $C_1,C_2$ is the $45^{\circ}$
projection of $C_1\times C_2$. Palis conjectured that "typically" $C_1-C_2$ is either small in the sense that it has Lebesgue measure zero or big in the sense that it contains some intervals.
See e.g.  \cite{PT}, \cite{MY},  \cite{PS}, \cite{DekSim}.

For percolation fractals Falconer and Grimmett \cite{FG} studied the existence of intervals in the vertical and horizontal projections. Our work is a generalization of their result.

\medskip

\textbf{The most important conclusion} of our result is that
whenever the probability $p>1/M$  then  although the set $E$ may
be totally disconnected, almost surely conditioned on
non-extinction, all projections in various families (orthogonal,
radial, co-radial projections) contain some intervals. On the other hand,
if $p\leq 1/M$ this cannot happen. Namely, Falconer \cite{FRF} proved that in this case the one dimensional Hausdorff measure of $E$ is almost surely zero.

\medskip

\textbf{The paper is organized as follows.} In the second section we give precise definitions of the objects we study, we also formulate our main results. In the third section we explain the importance of statistical self-similarity. The fourth section contains the proof of Theorem \ref{thm:ortho}. In the fifth section we add one more idea that lets us upgrade this argument, yielding the proof of Theorem \ref{thm:many}. Finally, in the sixth section we formulate the most general form of our results, Theorem \ref{thm:general}, and Theorem \ref{thm:nonlin} follows as a special case.

\section{Notation and results}

\subsection{Mandelbrot percolation}

First we provide a definition of the random Cantor set $E$ (we will call it the fractal percolation) which is the object of interest of this paper. Given
$$
M\geq 2\mbox{ and } p_{i,j}\in [0,1] \mbox{ for every }
i,j\in \left\{0,\dots ,M-1\right\},
$$
we partition the unit square $K=[0,1]^2$ into $M^2$ congruent squares of side length $1/M$.
\begin{equation*}\label{153}
  K=\bigcup _{i,j=0}^{M-1}K_{i,j}\mbox{ where }
 K_{i,j}:=\left[\frac{i}{M},\frac{i+1}{M}\right]\times
\left[\frac{j}{M},\frac{j+1}{M}\right].
\end{equation*}
In the first step, we retain the square $K_{i,j}$ with probability $p_{i,j}$ and we discard $K_{i,j}$ with
probability $1-p_{i,j}$ for every $(i,j)\in
\left\{0,\dots ,M-1\right\}^2$ independently. The union of squares
retained is denoted $E_1$. Within each square $K_{i,j}\subset E_1$
we repeat the process described above independently. The squares
of side length $1/M^2$ retained are called level two squares and
their union is called $E_2$. Similarly, for every $n$ we construct
the set $E_n$. The object of interest
in this paper is the random set $E:=\cap _{n=1}^{\infty }E_n$.

More formally, let $\mathcal{T}_n$ be the partition of $K$ into $M$-adic squares of level $n$. For each square $L\in \mathcal{T}_n$ we can find two sequences $\{i_1,\ldots,i_n\}, \{j_1,\ldots,j_n\}\in \{0,\ldots,M-1\}^n$ such that

\[
L=\left[\sum_{l=1}^n i_l\cdot M^{-l}, \sum_{l=1}^n i_l\cdot M^{-l} + M^{-n}\right] \times \left[\sum_{l=1}^n j_l\cdot M^{-l}, \sum_{l=1}^n j_l\cdot M^{-l} + M^{-n}\right].
\]
We will denote such square by $K_{\ien, \jen}$, where

\[
\underline{i}_n:=(i_1,\dots ,i_n),\ \underline{j}_n:=(j_1,\dots ,j_n)\in \left\{0,\dots ,M-1\right\}^n.
\]
Clearly, $K_{\underline{i}_{n+1}, \underline{j}_{n+1}} \subset K_{\ien', \jen'}$ if and only if

\[
i_k=i_k', j_k=j_k' \mbox{ for all } k=1,\ldots,n.
\]

We define $\mathcal{E}_0 = \mathcal{T}_0 = (\emptyset, \emptyset)$ and then we construct inductively a random family $\{\mathcal{E}_n\}, \mathcal{E}_n \subset \mathcal{T}_n$. That is, if $(\ien; \jen)\notin \mathcal{E}_n$ then $(i_1,\ldots,i_n,i; j_1,\ldots,j_n,j) \notin \mathcal{E}_{n+1}$ for all $i,j\in \{0,\ldots,M-1\}$ and if $(\ien; \jen)\in \mathcal{E}_n$ then $(i_1,\ldots,i_n,i; j_1,\ldots,j_n,j) \in \mathcal{E}_{n+1}$ with probability $p_{i,j}$. Those events are jointly independent.

We denote

\[
E_n = \bigcup_{(\ien;\jen)\in \mathcal{E}_n} K_{\ien,\jen}
\]
and

\[
E = \bigcap_{n=1}^\infty E_n.
\]
The sequence $\{E_n\}$ is a decreasing sequence of compact sets, hence $E$ is nonempty if and only if all $E_n$ are nonempty. It follows easily from the general theory of branching processes, see for example \cite[Theorem 1]{AtNe},  that
\begin{equation*}\label{163}
 E\ne \emptyset  \mbox{  with positive probability }
\mbox{ if and only if } \sum\limits_{0\leq i,j\leq M-1}p_{i,j}>1.
\end{equation*}

We will always assume
\begin{equation*}\label{162}
\sum\limits_{i,j=0}^{M-1}p_{i,j}>M
\end{equation*}
and our results will be conditioned on $E$ being nonempty.
It was proved by several authors:
Peyri\`{e}re \cite{Pe}, Hawkes, \cite{Ha}
Falconer \cite{FRF} and  Mauldin, Williams \cite{MWRF} and Graf \cite{Gr}
that
\begin{equation*}\label{155}
\mbox{ If } E\ne \emptyset \mbox{ then }
\dim_{\rm H}(E)=\frac{\log \left(\sum\limits_{i,j=0}^{M-1}p_{i,j}\right)}{\log M}\mbox{ a.s. }
\end{equation*}
In particular, under our assumptions $\dim_{\rm H} E >1$ (provided $E$ is nonempty).

The proof of this statement involves proving the following:

\begin{fact} \label{en}The following assertion holds almost surely:

If $E(\omega )\ne\emptyset $ then
\begin{equation*}\label{78}
  \lim\limits_{n\to\infty} \frac{1}{n}\log \#\mathcal{E}_n(\omega )\to \log\sum\limits_{i,j=0}^{M-1}
  p_{ij}.
\end{equation*}
\end{fact}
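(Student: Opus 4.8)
The plan is to recognize $\#\mathcal{E}_n$ as the generation sizes of a Galton--Watson branching process and then invoke the standard limit theory for supercritical processes (as in \cite{AtNe}). Indeed, each retained square independently produces, among its $M^2$ children, a random number of retained squares $Z=\sum_{i,j=0}^{M-1}B_{i,j}$, where the $B_{i,j}$ are independent Bernoulli variables with $\mathbb{P}(B_{i,j}=1)=p_{i,j}$. Thus, starting from $\#\mathcal{E}_0=1$, the process $Z_n:=\#\mathcal{E}_n$ is Galton--Watson with offspring distribution that of $Z$ and mean $m:=\mathbb{E}[Z]=\sum_{i,j=0}^{M-1}p_{i,j}$. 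By our standing assumption $m>M>1$, so the process is supercritical; moreover the offspring is bounded, $Z\le M^2$, a fact I will use. As already noted in the excerpt, $\{E=\emptyset\}$ coincides with extinction $\{Z_n=0\ \text{for some}\ n\}$ and $\{E\ne\emptyset\}$ with survival.

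First I would pass to the normalized process $W_n:=Z_n/m^n$, which is a nonnegative martingale for the natural filtration and hence converges almost surely to a limit $W\ge 0$. Since the offspring is bounded it has finite variance $\sigma^2$, and the classical variance recursion gives
\[
\mathrm{Var}(W_n)=\frac{\sigma^2}{m(m-1)}\bigl(1-m^{-n}\bigr)\le \frac{\sigma^2}{m(m-1)},
\]
so $\sup_n\mathbb{E}[W_n^2]<\infty$. Therefore $W_n\to W$ also in $L^2$, whence $\mathbb{E}[W]=\lim_n\mathbb{E}[W_n]=1$ and in particular $\mathbb{P}(W>0)>0$. The goal now reduces to showing that $W\in(0,\infty)$ almost surely on the survival event, because on $\{W\in(0,\infty)\}$ we have, since $\log W_n\to\log W$ is finite,
\[
\frac1n\log Z_n=\log m+\frac1n\log W_n\longrightarrow \log m .
\]

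The main point, and the step I expect to require the most care, is upgrading ``$W>0$ with positive probability'' to ``$W>0$ almost surely on $\{E\ne\emptyset\}$''. Here I would exploit the statistical self-similarity of the construction: decomposing the tree at the first generation yields the distributional identity $W=m^{-1}\sum_{i=1}^{Z_1}W^{(i)}$, where the $W^{(i)}$ are i.i.d.\ copies of $W$ independent of $Z_1$. Consequently $s:=\mathbb{P}(W=0)$ satisfies the fixed-point equation $s=f(s)$, with $f(s)=\mathbb{E}[s^{Z_1}]$ the offspring generating function. Because $m>1$ forces $\mathbb{P}(Z\ge 2)>0$, $f$ is strictly convex, so $f(s)=s$ has in $[0,1]$ only the extinction probability $q<1$ and the value $1$. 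Since $\mathbb{E}[W]=1$ rules out $s=1$, while extinction plainly forces $W=0$ so that $s\ge q$, we conclude $s=q$. Hence $\{W=0\}$ and $\{E=\emptyset\}$ agree up to a null set, i.e.\ $W>0$ almost surely on $\{E\ne\emptyset\}$. Combined with the displayed limit, this proves the Fact.
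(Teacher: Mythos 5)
Your proof is correct, and it is essentially the argument the paper relies on: the paper does not prove Fact \ref{en} in the text but defers to the standard theory of supercritical Galton--Watson processes (citing Athreya--Ney), of which your argument --- identifying $Z_n=\#\mathcal{E}_n$ as Galton--Watson with mean $m=\sum p_{i,j}>M>1$, showing $W_n=Z_n/m^n$ converges in $L^2$ via the variance recursion (the bounded offspring $Z\le M^2$ making the Kesten--Stigum $Z\log Z$ condition automatic), and using the fixed-point equation $s=f(s)$ to identify $\{W=0\}$ with extinction --- is precisely the textbook instance. All steps check out, including the reduction of $\frac{1}{n}\log \#\mathcal{E}_n\to\log m$ on $\{E\ne\emptyset\}$ to the statement $W\in(0,\infty)$ almost surely on survival.
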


\subsection{Projections}

The object of our study is the existence of intervals in different
kinds of projections of $E$. The nature of projections of
angles $0$ or $\pi /2$ is conspicuously different and these cases
were already treated by Falconer and Grimmett in \cite{FG}. So, mostly we restrict our
attention to the domain of angles
$$
\mathfrak{D}:=\left(0,\pi /2\right)\cup \left(\pi /2,\pi \right).
$$

It will be convenient for us to use a special form of projections. Instead of the 'usual' orthogonal projection ${\rm proj}_\alpha$ onto some line  we will use projection $\Pi_\alpha$, the codomain of which is one of diagonals of $K$. If $\alpha\in (0,\pi/2)$ (i.e. if the projection is in upper left - lower right direction) we will use the nonorthogonal projection in direction $\alpha$ onto the interval $([0,0],[1,1])$. Otherwise, if $\alpha\in(\pi/2, \pi)$ and the projection is in the upper right - lower left direction, we will project onto the interval $([0,1],[1,0])$. Naturally, ${\rm proj}_\alpha(E)$ contains an interval if and only if $\Pi_\alpha(E)$ does. See Figure \ref{151}.

\begin{figure}
\begin{center}
\includegraphics[width=13.5cm]{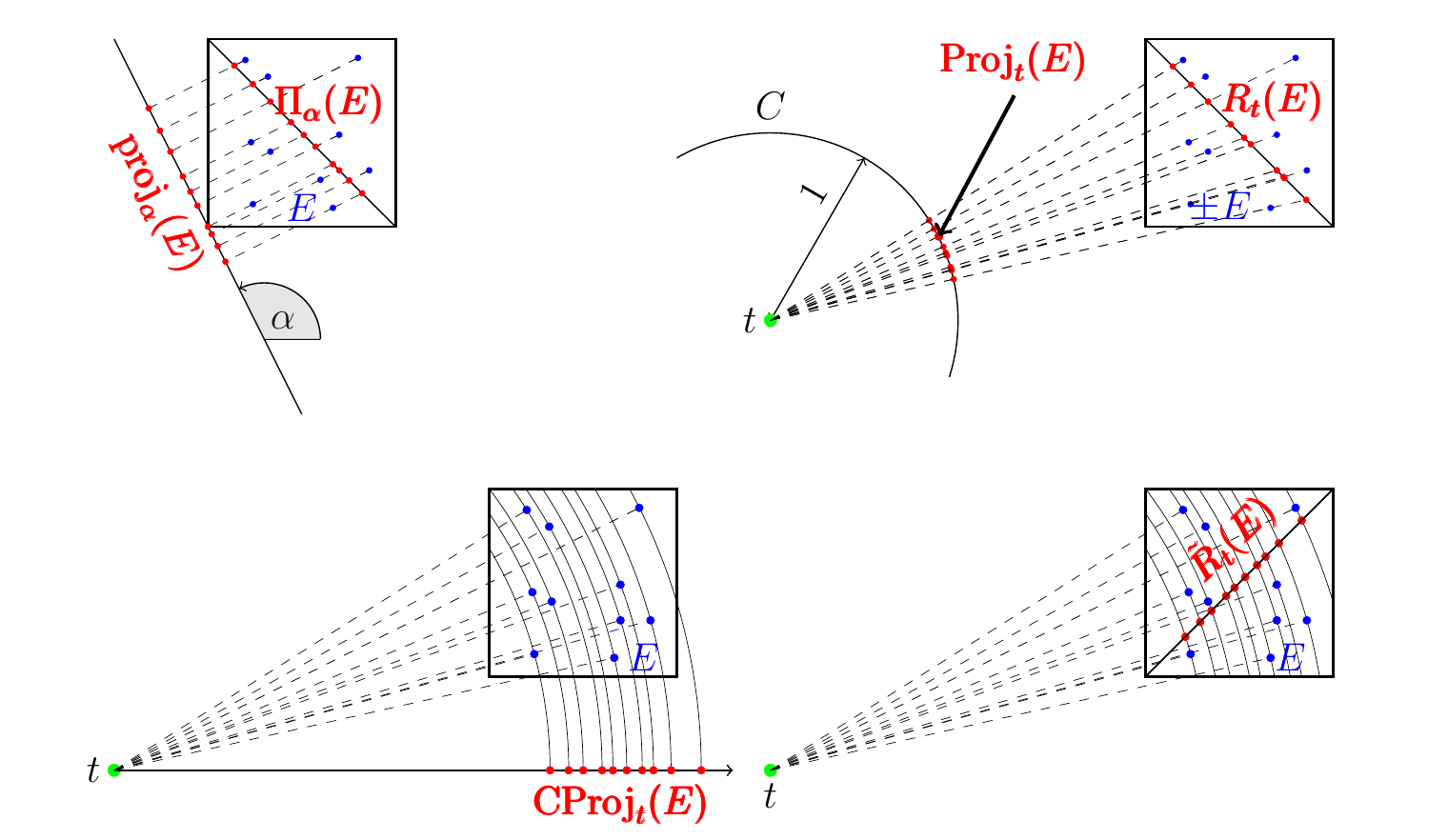}
\caption{The orthogonal $\mathrm{proj}_\alpha $, radial $\mathrm{Proj_t}$, co-radial $\mathrm{CProj}_t$ projections  and the auxiliary projections $\Pi _\alpha $, $R_t$, and $\tilde{R}_t$. }
\label{151}
\end{center}
\end{figure}


We are going to consider nonlinear projections of $E$ as well. Given $t\in \R^2$, the radial projection with center $t$ of set $E$ is denoted by ${\rm Proj}_t(E)$ and is defined as the set of angles under which points of $E\setminus\{t\}$ are visible from $t$. Given $t\in\R^2$, the co-radial projection with center $t$ of a set $E$ is denoted by ${\rm CProj}_t(E)$ and is defined as the set of distances between $t$ and points from $E$. Figure \ref{151} explains why we consider this object a projection.

Like in the case of orthogonal projections, we will consider auxiliary formulations. If the point $t$ is in 'diagonal' direction from $K$ (i.e. if both X and Y coordinates of $t$ are outside $[0,1]$) then instead of ${\rm Proj}_t$ with codomain $S^1$ and ${\rm CProj}_t$ with codomain $\R_+$ we can consider $R_t$ and $\tilde{R}_t$, whose codomains are diagonals of $K$. For example, as shown in Figure \ref{151}, if $t$ is in lower left direction from $K$ (both coordinates of $t$ are negative) then the codomain of $R_t$ is $([0,1],[1,0])$ and the codomain of $\tilde{R}_t$ is $([0,0],[1,1])$. Once again, $R_t(E)$ contains an interval if and only if ${\rm Proj}_t(E)$ does, and similarly for ${\rm CProj}_t(E)$ and $\tilde{R}_t(E)$.

There is a more general notion of a {\bf family of almost linear projections} we are going to use, but it is more complicated. The definition will be given in the last section.

\subsection{Results}

Let us start from a direct generalization of \cite{FG}. Let $\alpha\in \mathcal{D}$. In the fourth section we will define condition $A(\alpha)$ on the set of probabilities $\{p_{i,j}\}$, at the moment it is enough to know that if all $p_{i,j}>M^{-1}$ then $A(\alpha)$ is satisfied for all $\alpha\in\mathcal{D}$.

\begin{theorem} \label{thm:ortho}
Let $\alpha\in\mathcal{D}$. If $A(\alpha)$ holds and $E$ is nonempty then almost surely ${\rm proj}_\alpha(E)$ contains an interval.
\end{theorem}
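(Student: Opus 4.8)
The plan is to exploit the statistical self-similarity of the percolation construction together with a second-moment (or branching-process) argument showing that the projection $\Pi_\alpha(E_n)$ covers a fixed target interval with overwhelming probability as the construction unfolds. First I would fix the angle $\alpha\in\mathcal{D}$ and work with the auxiliary projection $\Pi_\alpha$ onto the appropriate diagonal of $K$, so that projections of the $M$-adic squares $K_{\ien,\jen}$ become intervals of length proportional to $M^{-n}$ on a line, with their left endpoints determined by an affine function of $(\ien,\jen)$. The key structural observation is that $\Pi_\alpha$ maps the collection of level-one squares to a family of intervals on the diagonal whose union, counted with the retention probabilities $p_{i,j}$, can be recast as a one-dimensional covering process. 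The condition $A(\alpha)$ is precisely what guarantees that the expected number of retained squares projecting onto any given sub-target exceeds the threshold needed for the associated covering process to survive; in the uniform case $p_{i,j}>M^{-1}$ this is the statement that each column of the projection is retained in expectation more than once, which is why $A(\alpha)$ holds automatically there.

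Next I would set up the self-similarity explicitly. Because each retained level-$n$ square contains an independent rescaled copy of the whole percolation, the event ``$\Pi_\alpha(E)$ contains an interval of a prescribed length inside the projected image of $K_{\ien,\jen}$'' has the same probability for every square, and these events are conditionally independent given the level-$n$ configuration. I would define $q$ to be the probability that $\Pi_\alpha(E)\supset J$ for some fixed nontrivial target interval $J$ in the interior of the diagonal, and aim to prove $q>0$; combined with self-similarity and the fact that conditioned on non-extinction infinitely many disjoint subsquares survive, a Borel--Cantelli / zero-one argument then upgrades $q>0$ to the almost-sure conclusion. The heart of the matter is therefore establishing $q>0$, and for this the natural tool is a comparison with a supercritical branching random walk or an oriented-percolation covering argument: one tracks, level by level, which sub-intervals of $J$ are ``fully covered from below'' by projections of surviving squares, and shows that under $A(\alpha)$ the population of covered intervals dominates a supercritical process, so that with positive probability the covering persists to every level and hence $J\subset\Pi_\alpha(E)$.

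The step I expect to be the main obstacle is the passage from ``positive expected number of covering squares'' to actual persistence of a covered interval through all infinitely many levels. The difficulty is that covering an interval is a more delicate requirement than merely having many surviving squares: gaps introduced at one level can propagate, and the projections of distinct squares overlap in a correlated way along the diagonal, so a naive first-moment count does not suffice. To handle this I would discretize the target into a fixed number of smaller intervals and set up an inductive/renormalization scheme showing that, with probability bounded away from zero uniformly in the level, each target subinterval is again fully covered by projections of surviving children. This requires a careful choice of the retention threshold encoded in $A(\alpha)$ and a second-moment estimate to control the variance of the number of covering squares, ensuring the covering event does not degenerate. Once this uniform lower bound on the survival of the covering is in place, the branching structure guarantees a positive probability that \emph{every} level retains a fully covered interval, yielding $\Pi_\alpha(E)\supset J$ with probability $q>0$ and completing the argument.
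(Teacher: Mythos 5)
Your outer framework---passing to the auxiliary projection $\Pi_\alpha$ onto the diagonal, and using statistical self-similarity plus independence across infinitely many surviving subsquares to upgrade a positive-probability bound $q>0$ to the almost sure statement---matches the paper exactly. The genuine gap is in the core step, the proof that $q>0$. Your inductive scheme asks only that ``with probability bounded away from zero uniformly in the level, each target subinterval is again fully covered,'' and this cannot close the induction: the covering must survive \emph{all} infinitely many levels simultaneously, so a per-level success probability merely bounded below by some $p_0<1$ produces a vanishing infinite product. Falling back on ``the branching structure'' of covered subintervals does not rescue it: a supercritical branching population of covered subintervals yields only a nested Cantor family of points that stay covered at every level---i.e., a Cantor subset of $\Pi_\alpha(E)$---not an interval, since points outside surviving lineages are covered only up to a finite level. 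What is actually needed, and what the paper does, is to make the conditional failure probability at level $n$ summable, indeed superexponentially small. The mechanism is to track not covered intervals but the covering \emph{multiplicity}: the paper's random variable $V_n(x)$ counts the squares $(\underline{i}_{nr},\underline{j}_{nr})\in\mathcal{E}_{nr}$ with $x\in\Pi_\alpha\circ\varphi_{\underline{i}_{nr},\underline{j}_{nr}}(I_1)$, and one proves inductively that $V_n(x)\geq (3/2)^n$. Condition A($\alpha$), i.e.\ $F_\alpha^{r}\ind_{I_1}\geq 2\,\ind_{I_2}$, says each counted square contributes an independent, bounded number of covering children with expectation at least $2$, so Azuma--Hoeffding bounds the conditional failure probability by $\gamma^{(3/2)^n}$, which beats the exponentially many points to be checked. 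Your proposed ``second-moment estimate to control the variance'' is the wrong tool here: second moments give positive-probability lower bounds at a single level, never the superexponential certainty needed across all levels at once.

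A second concrete defect: your discretization ``into a fixed number of smaller intervals'' is too coarse. Level-$nr$ squares project to intervals of length of order $M^{-nr}$, so gaps at that scale are invisible to any fixed finite grid. The paper's grid $X_n$ has mesh $\delta M^{-nr}$ and cardinality at most $cM^{nr}$---it refines exponentially with $n$, just slowly enough that $\sum_n cM^{(n+1)r}\gamma^{(3/2)^n}<\infty$. Moreover, checking grid points controls the continuum only because Condition A is formulated with the buffered pair $I_1\subset {\rm int}\,I_2$: by the robustness observation, a square counted at a grid point $x$ with the inner interval $I_1$ automatically counts, at every nearby point $y$, with the outer interval $I_2$, so success at the grid points forces covering of all of $I_1$ at every level, and hence $I_1\subset\Pi_\alpha(E)$ by compactness. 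Your sketch has no analogue of this two-interval buffer, and without it the pointwise (or finitely-many-subinterval) counts say nothing about the points in between.
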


Our next, stronger, result lets us consider projections in all directions at once.

\begin{theorem} \label{thm:many}
Assume that $A(\alpha)$ holds for all $\alpha\in \mathcal{D}$ and that $E\neq\emptyset$. To handle the horizontal and vertical projections, we also assume that
\[
\forall i,j\in \{0,\ldots,M-1\}, \ \sum_{l=0}^{M-1} p_{i,l} > 1 \mbox{ and } \sum_{k=0}^{M-1} p_{k,j} >1.
\]
Then almost surely ${\rm proj}_\alpha(E)$ contains an interval for all $\alpha\in S^1$.
\end{theorem}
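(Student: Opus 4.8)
The plan is to bootstrap Theorem~\ref{thm:ortho} from a statement about a single fixed direction to a statement about all directions simultaneously, the obstruction being that the set of directions is uncountable, so the individual almost-sure events of Theorem~\ref{thm:ortho} cannot simply be intersected. First I would reduce the problem. Using the auxiliary diagonal projections $\Pi_\alpha$ (coverage of an interval by $\mathrm{proj}_\alpha(E)$ is equivalent to coverage by $\Pi_\alpha(E)$) and the symmetry between the two diagonals, it suffices to let $\alpha$ range over $(0,\pi/2)$, the case $(\pi/2,\pi)$ being identical. The two exceptional directions $\alpha\in\{0,\pi/2\}$ are finite in number and are treated directly by Falconer--Grimmett \cite{FG}: the extra hypotheses $\sum_l p_{i,l}>1$ and $\sum_k p_{k,j}>1$ say exactly that each column and each row is a supercritical one-dimensional percolation, so a.s.\ the horizontal and vertical projections contain intervals; adjoining these two almost-sure events to a countable family costs nothing. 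Finally I exhaust $(0,\pi/2)$ by countably many compact subarcs, so a countable intersection of almost-sure events will close the argument, and it remains to produce for each compact subarc an almost-sure covering statement.

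The second reduction converts each compact arc into a single positive-probability event via statistical self-similarity (Section~3); concretely, it is enough to prove the following. \emph{Main Lemma:} for every $\bar\alpha\in(0,\pi/2)$ there are an arc $I\ni\bar\alpha$, an interval $J$ on the diagonal, and a constant $c>0$ with
\[
\mathbb{P}\big(\ \Pi_\alpha(E)\supseteq J \ \text{ for every } \alpha\in I\ \big)\ \geq\ c .
\]
Granting this, I finish as follows. By Fact~\ref{en}, $\#\mathcal{E}_n\to\infty$ a.s.\ on non-extinction, so a.s.\ one can extract infinitely many pairwise disjoint retained squares $Q_1,Q_2,\dots$; inside each $Q_k$ the construction is an independent rescaled copy of the whole percolation, and since a scaling-plus-translation preserves the direction $\alpha$, one has $\Pi_\alpha(E\cap Q_k)=\psi_k(\Pi_\alpha(E_k'))$ for an affine map $\psi_k$ of the diagonal and a fresh copy $E_k'$. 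Hence the event ``$\Pi_\alpha(E\cap Q_k)\supseteq\psi_k(J)$ for all $\alpha\in I$'' has probability $\geq c$, and these events are independent across $k$; a.s.\ at least one occurs, and a single occurrence forces $\Pi_\alpha(E)$ to contain an interval for every $\alpha\in I$. Covering the chosen compact arc by finitely many such $I$, intersecting the resulting almost-sure events, and then intersecting over the countably many compact arcs yields the theorem.

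The heart of the matter --- the ``one more idea'' --- is the Main Lemma, and the point is to run the percolation-covering argument behind Theorem~\ref{thm:ortho} not on the one-dimensional target interval but on the two-dimensional parameter rectangle $I\times J$ in $(\alpha,y)$-space. A pair $(\alpha,y)\in I\times J$ is ``covered'' precisely when the line through $y$ in direction $\alpha$ meets $E$; each retained square $K_{\ien,\jen}$ is responsible for the set of parameters whose line passes through it, a region of $(\alpha,y)$-area comparable to $|I|$ times the projected width of the square. Descending the levels sets up a fractal-percolation-type covering of $I\times J$ whose branching number is, up to the harmless factor $|I|$, the same one controlled by condition $A(\alpha)$. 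Since $A$ holds throughout the compact arc and the relevant quantities vary continuously with $\alpha$, shrinking $I$ keeps this two-dimensional process supercritical uniformly over $I$, and supercriticality yields survival, i.e.\ full coverage of $I\times J$ with positive probability.

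I expect the main obstacle to be exactly this last step: showing that the two-parameter covering is genuinely supercritical and survives, uniformly over the arc. The difficulties are that the parameter-regions attached to the squares are curved (bow-tie shaped) rather than dyadic rectangles, so their overlaps and the resulting dependence must be controlled; that the continuity of the branching data in $\alpha$ must be made quantitative in order to choose $I$ small while keeping $c$ bounded below; and that the survival-from-supercriticality implication, which for a fixed direction is established in the proof of Theorem~\ref{thm:ortho}, must be re-derived in the two-dimensional setting, presumably by a second-moment estimate on the uncovered subset of $I\times J$ showing that it is empty with positive probability. Once this uniform-over-$I$ coverage is in hand, the self-similarity and countable-exhaustion steps above are routine.
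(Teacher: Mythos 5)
Your outer architecture coincides with the paper's: reduction to the auxiliary diagonal projections $\Pi_\alpha$, Falconer--Grimmett for the two axis directions (which is exactly what the extra row/column hypotheses are for), exhaustion of $\mathcal{D}$ by countably many closed arcs on which Condition A holds with uniform data $I_1, I_2, r$ (this is the content of Proposition \ref{prop:arob}), your Main Lemma as the positive-probability statement for one arc, and the self-similarity bootstrap of Section 3 to upgrade to an almost-sure statement. All of that is correct and is how the paper proceeds.

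The gap is in the proof of the Main Lemma --- precisely at the step you yourself flag as the main obstacle --- and the substitute you propose there would not work. A second-moment (Paley--Zygmund) estimate on the uncovered subset of $I\times J$ is a tool for showing that a random set is \emph{nonempty} with positive probability; here you need the opposite conclusion, that the uncovered set is \emph{empty}, and moment bounds only yield that its expected measure is small or zero, which is compatible with a nonempty, nowhere dense set of uncovered parameters $(\alpha,y)$ --- for those $\alpha$ no interval is covered, and the theorem fails. Similarly, your ``branching number up to the harmless factor $|I|$'' bookkeeping is a first-moment (expected area) statement and cannot by itself give simultaneous coverage of uncountably many parameters. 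The paper's actual mechanism requires no new two-dimensional covering process: it reruns the induction from the proof of Theorem \ref{thm:ortho} over a finite net $X_n\times Y_n\subset I_1\times J$ of mesh $\delta M^{-nr}/3$ at stage $n$ (cardinality at most $cM^{2nr}$, still only exponential in $n$), where $\delta$ is the slack between $I_1\subset \mathrm{int}\, I_2$. The robustness inequality \eqref{g} transfers coverage from net points to everything nearby: if a level-$nr$ square is counted at a net point $(y,\kappa)$ with the \emph{small} target $I_1$, it is counted at every $(x,\theta)$ in the surrounding mesh cell with the \emph{large} target $I_2$. The inductive event is the exponentially growing count $V_n(y,\kappa)\geq (3/2)^n$ at every net point; conditioned on it, $V_{n+1}$ dominates a sum of at least $(3/2)^n$ independent bounded random variables each of mean at least $2$ (Condition A), so Azuma--Hoeffding gives failure probability at most $\gamma^{(3/2)^n}$ per net point, and the union bound over the exponentially many net points yields a convergent product, hence positive probability of covering all of $I_1\times J$ simultaneously. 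This net-plus-slack-plus-superexponential-large-deviation mechanism, not a second-moment argument, is the ``one more idea'' of the paper, and it is exactly the piece missing from your proposal.
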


Finally, let us consider nonlinear projections.

\begin{theorem} \label{thm:nonlin}
Assume that $A(\alpha)$ holds for all $\alpha\in \mathcal{D}$ and that $E\neq\emptyset$. Then almost surely both ${\rm Proj}_t(E)$ and ${\rm CProj}_t(E)$ contain an interval for all $t\in \R^2$.
\end{theorem}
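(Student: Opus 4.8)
The plan is to reduce Theorem~\ref{thm:nonlin} to the orthogonal case, Theorem~\ref{thm:many}, by exploiting the fact that radial and co-radial projections, although genuinely nonlinear, become indistinguishable from orthogonal projections once we zoom into a small piece of $E$ lying at a definite distance from the center $t$. The key is a comparison of scales. If $Q=K_{\ien,\jen}$ is a level-$n$ square of side $M^{-n}$ with $\mathrm{dist}(t,Q)\ge d>0$, then the fibres of ${\rm Proj}_t$ on $Q$ are rays through $t$, which deviate from the parallel fibres of the orthogonal projection $\Pi_\alpha$ — where $\alpha$ is taken perpendicular to the segment from $t$ to the centre of $Q$ — by an angle of order $M^{-n}/d$; consequently the nonlinearity, measured in the codomain, is only of second order $M^{-2n}/d$. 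After rescaling $Q$ to the unit square this says that ${\rm Proj}_t|_Q$ is $C^1$-close, within $O(M^{-n}/d)$, to a genuine orthogonal projection, while the interval one hopes to find has scale-invariant (order one) length. The same holds for ${\rm CProj}_t$, whose circular fibres flatten to lines perpendicular to the radial direction. This is exactly the \emph{almost-linearity} alluded to at the end of this section, and it is what makes the reduction possible.

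First I would record the two ingredients that feed the reduction. By statistical self-similarity (Section~3), conditioned on $Q\in\mathcal{E}_n$ the restriction $E\cap Q$, rescaled by $M^n$, is a copy of $E$; hence Theorem~\ref{thm:many} applies to it. Applying Theorem~\ref{thm:many} to $E$ and, by countability, to every sub-copy $E\cap Q$ over all $n$ and all $Q\in\mathcal{E}_n$, I obtain a single full-probability event on which every surviving sub-copy has orthogonal projections containing intervals in all directions at once. Second, I would isolate the robustness statement that the argument proving Theorem~\ref{thm:many} uses only the almost-linearity of the projection together with self-similarity, so that it continues to produce an interval for any projection map that is $C^1$-close enough to orthogonal, and that the interval so produced is \emph{stable} under small perturbations of the projection. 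This robust, almost-linear form is the real content, and it is precisely what the general theorem of the last section is built to carry.

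With these in hand the proof proceeds by localisation. Fix $t\in\R^2$. Since $E$ is spread through $K$ while $t$ is a single point, for every large $n$ there are surviving squares $Q\in\mathcal{E}_n$ with $\mathrm{dist}(t,Q)\ge d$ for a fixed $d>0$; taking $n$ large drives the nonlinearity $O(M^{-n}/d)$ below the (random but scale-invariant, hence bounded below) length of the interval guaranteed in the rescaled copy. The robust form of Theorem~\ref{thm:many} then places an interval inside ${\rm Proj}_t(E\cap Q)\subset{\rm Proj}_t(E)$, and identically for ${\rm CProj}_t$. The passage from ``for each $t$, almost surely'' to ``almost surely, for all $t$'' is then handled exactly as the passage from Theorem~\ref{thm:ortho} to Theorem~\ref{thm:many}: because the interval produced is stable, the good event is open in the parameter $t$, so it suffices to treat a dense countable set of centers; for $t$ ranging over a bounded region a net argument combined with a Borel–Cantelli estimate closes the gap, while for $t$ far from $K$ the projection is globally almost-linear and Theorem~\ref{thm:many} already applies to all of $E$ at once.

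I expect the main obstacle to be the uniformity over the non-compact center space $\R^2$, aggravated by the degeneration of ${\rm Proj}_t$ as $t$ approaches $E$, where the nonlinearity on the nearby squares blows up. The remedy is built into the localisation: one never projects a square close to $t$, and since $\dim_{\rm H}E>1$ (Fact~\ref{en}) a single point $t$ can be near only a negligible part of $E$, so surviving squares at distance bounded below from $t$ are available at every scale. Combined with the exact self-similarity — which keeps the probability of the relevant ``good square'' event bounded below at every level — this makes the Borel–Cantelli/net argument uniform in $t$, and is what allows the almost-sure conclusion to be quantified over all centers simultaneously.
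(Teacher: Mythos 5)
Your overall route is the paper's: localise via statistical self-similarity to small surviving squares far from $t$, observe that the radial and co-radial projections restricted to such squares are almost linear with nonlinearity $O(M^{-n}/d)$, and conclude via a parametrized, robust version of the projection argument --- which is exactly Theorem \ref{thm:general} together with Definition \ref{def:alfop}. But two steps do not survive scrutiny as written. First, the claim that ``because the interval produced is stable, the good event is open in the parameter $t$, so it suffices to treat a dense countable set of centers'' is not a valid mechanism. The stability margin of the inductive construction at scale $n$ is $\delta M^{-nr}/3$, which tends to $0$ with $n$: success of the scheme for a single $t_0$ at all scales yields no neighbourhood of $t_0$ of any definite radius on which the interval persists, so openness plus density of a countable set proves nothing. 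The correct mechanism --- the one the paper builds into condition iii) of Definition \ref{def:alfop} and the proof of Theorem \ref{thm:general} --- is to run the induction simultaneously for the whole family, covering the parameter set at stage $n$ by a net of pieces $Z_j^{(n)}$ of angular oscillation at most $\delta M^{-nr}/3$, whose cardinality grows only exponentially in $n$, and beating that exponential count with the Azuma--Hoeffding failure probability $\gamma^{(3/2)^n}$. Your parenthetical ``net argument combined with a Borel--Cantelli estimate'' is precisely this; the openness claim should be deleted, and the nets must be refined with the scale rather than produced after the fact from a single-parameter run.

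Second, you never control the \emph{direction} from $t$ to the chosen square $Q$, only the distance. Theorem \ref{thm:nonlin} assumes Condition A($\alpha$) only for $\alpha\in\mathcal{D}$; unlike Theorem \ref{thm:many} it carries no hypothesis covering the horizontal and vertical directions. If $Q$ happens to lie in a nearly vertical (or horizontal) direction from $t$, the localized radial projection is close to an axis-parallel orthogonal projection, for which no Condition A is available, and the reduction fails there. The paper's Section 3 is careful about this: almost surely $E$ is not contained in the union of the horizontal and vertical lines through $t$, so one can choose $Q$ with direction from $t$ bounded away from the axes, and then condition i) of Definition \ref{def:alfop} holds after subdividing the family of centers into finitely many cones at large distance. (Your appeal to $\dim_{\rm H}E>1$ for the availability of such squares is more than is needed and not the relevant fact.) With these two repairs your argument coincides with the paper's proof.
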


\section{Statistical self-similarity}

The goal of this section is to explain two simple ideas, explaining why the statistical self-similarity of the construction of $E$ simplifies our task.

Let $\varphi_{\ien, \jen}$ be the natural contraction sending $K$ onto $K_{\underline{i}_n,\underline{j}_n}$. That is,
\begin{equation*}\label{1}
\varphi _{\underline{i}_n,\underline{j}_n}
(x,y)=\frac{1}{M^n}\cdot (x,y)+t_{\underline{i}_n,\underline{j}_n},
\end{equation*}
where $t_{\ien, \jen}$ is the lower left corner of $K_{\ien, \jen}$. Then by the {\bf statistical self-similarity} of $E$ we mean the following fact: for any $K_{\ien, \jen}\in \mathcal{T}_n$ the conditional distribution of $E\cap K_{\ien,\jen}$ conditioned on $(\ien; \jen)\in \mathcal{E}_n$ is the same as distribution of $\varphi_{\ien, \jen}(E)$.

The first idea, used already in \cite{FG}, is as follows. Let $E$ be a nonempty realization of the fractal percolation. Almost surely, $E$ has infinitely many points, hence we can find an infinite sequence of numbers $n_k$ and squares $K_{\underline{i}_{n_k}, \underline{j}_{n_k}} \subset E_{n_k}$ such that any two squares $K_{\underline{i}_{n_k}, \underline{j}_{n_k}}$ are not contained in each other. Fix $\alpha$. The probability that ${\rm proj}_\alpha(E\cap K_{\underline{i}_{n_k}, \underline{j}_{n_k}})$ contains an interval is the same for each $k$ (and the same as probability that ${\rm proj}_\alpha(E)$ contains an interval) and those are independent events. Hence, it is enough to prove that ${\rm proj}_\alpha(E)$ contains an interval with positive probability to know that it contains an interval with probability 1 (conditioned on $E$ being nonempty).

The second idea is quite similar. Let $t\in \R^2$ and consider the radial projection with center $t$ (for co-radial projection it works much the same). Once again, if $E$ is nonempty then we can almost surely find a square $K_{\underline{i}_{l}, \underline{j}_{l}}$ with nonempty intersection with $E$ and which $t$ does not belong to and is in diagonal direction from ($E$ almost surely is not contained in a horizontal or vertical line). We can then construct the family $K_{\underline{i}_{n_k}, \underline{j}_{n_k}}$ of subsets of $K_{\underline{i}_{l}, \underline{j}_{l}}$ such that the size of each $K_{\underline{i}_{n_k}, \underline{j}_{n_k}}$ is very small compared to its distance from $t$ (we just need to take them sufficiently small). Note that not only $t$ is in diagonal direction from each $K_{\underline{i}_{n_k}, \underline{j}_{n_k}}$, the direction is actually bounded away from horizontal and vertical.

The probability that ${\rm Proj}_t(E\cap K_{\underline{i}_{n_k}, \underline{j}_{n_k}})$ contains an interval is the same as probability that ${\rm Proj}_{\varphi_{\underline{i}_{n_k}, \underline{j}_{n_k}}^{-1}(t)}(E)$ contains an interval. Hence, to prove that ${\rm Proj}_t(E)$ almost surely contains an interval, it is enough to prove that the probability that ${\rm Proj}_{t'}(E)$ contains an interval is uniformly bounded away from zero for $t'$ far away from $K$ and in direction bounded away from horizontal and vertical.

It is a natural observation that the radial/co-radial projections with center sufficiently far away do not differ much from linear projections. Indeed, this is how this idea will be used in the proof of Theorem \ref{thm:nonlin} in the last section.

\section{Proof of Theorem \ref{thm:ortho}}

\subsection{Ideas}

As our main idea comes from paper of Falconer and Grimmett \cite{FG}, let us start by recalling their proof.
We will assume the simplest case: all the probabilities are equal to $p>M^{-1}$. We want to prove that the probability that the vertical projection of the percolation fractal is the whole interval $[0,1]$ is positive. For any $n>0$ let us divide $[0,1]$ into intervals of length $M^{-n}$ and let us code them by the usual $M$-adic codes. Over each interval $C(i_1,\ldots,i_n)$ there is a whole column of $M^n$ $M$-adic squares of level $n$, and $C(i_1,\ldots,i_n)$ is contained in the vertical projection of $E_n$ if and only if at least one of those squares belongs to $\mathcal{E}_n$. Denoting by $A_n(i_1,\ldots,i_n)$ the number of squares above $C(i_1,\ldots,i_n)$ contained in $\mathcal{E}_n$, we need to prove that with positive probability all $A_n(i_1,\ldots,i_n)$ (for all possible sequences $\ien$) are positive.

Note that
\begin{equation} \label{abcd}
{\mathbb E}(A_{n+1}(i_1,\ldots,i_n,j)| A_n(i_1,\ldots,i_n)=a) = Mpa.
\end{equation}

Choose any $\gamma \in (1, Mp)$ and let $G_n(i_1,\ldots,i_n)$ be the event that

\[
A_{n+1}(i_1,\ldots,i_n,j) > \gamma A_n(i_1,\ldots,i_n)
\]
for all $j=0,1,\ldots, M-1$. By large deviation estimations,

\[
1-P(G_n(i_1,\ldots,i_n)) \approx \tau^{A_n(i_1,\ldots,i_n)}
\]
for some $\tau <1$. Hence, if all the events $G_1(i_1),\ldots,G_{n-1}(i_1,\ldots,i_{n-1})$ hold then $A_n(i_1,\ldots,i_n) \geq \gamma^n$ and so

\[
P(G_n(i_1,\ldots,i_n)| G_1(i_1)\wedge\ldots\wedge G_{n-1}(i_1,\ldots,i_{n-1})) > 1-c \tau^{\gamma^n}.
\]
Hence, at level $n$ we have to check exponentially big number of events (precisely, $M^n$ of them) but each of those events is superexponentially certain to happen. It follows that with positive probability all those events will happen.

Our goal in this section is a more complicated statement: for the same kind of percolation fractal we fix a direction $\alpha$ (neither horizontal nor vertical) and we want to check that the projection of the fractal in this direction contains an interval with positive probability. Equation \eqref{abcd} does not hold: even if a point belongs to a projection of some $n$-th level square, it does not imply that the expected number of $n+1$-st level squares in the approximation of the percolation fractal such that their projections contain the point is greater than 1. More precisely, if the point belongs to the projection of the 'central' part of the square then everything might work, but not for the points very close to the ends of the projection interval.

To go around this technical problem, we only count the number of 'central' parts of projections of $n$-th level squares that a given point belongs to. This lets us replace \eqref{abcd} by Condition A($\alpha$) as our main working tool. Note that if we check that a sufficiently dense set of points belongs to 'central' parts of projections of some squares from $n$-th approximation of the fractal, the whole projections will cover everything. We only need to take care that the number of points needed at step $n$ grows at most exponentially fast with $n$ and then the Falconer and Grimmett's argument will go through.

\subsection{Condition A}

Time to give the details. We fix $\alpha\in\mathcal{D}$. We are going to consider $\Pi_\alpha$ instead of ${\rm proj}_\alpha$, i.e. we are projecting onto a diagonal $\Delta_\alpha$ of $K$. For any $(\ien, \jen)$ the map $\Pi_\alpha\circ \varphi_{\ien, \jen}: \Delta_\alpha \to \Delta_\alpha$ is a linear contraction of ratio $M^{-n}$. We will use its inverse: a map $\psi_{\alpha, \ien, \jen}: \Pi_\alpha(K_{\ien,\jen})\to \Delta_\alpha$. It is a linear expanding map (of ratio $M^n$) and it is onto.

Consider the class of nonnegative real functions on $\Delta_\alpha$, vanishing on the endpoints. There is a natural random inverse Markov operator $G_\alpha$ defined as

\[
G_{\alpha}f(x) = \sum_{(i,j)\in \mathcal{E}_1; x\in \Pi_\alpha(K_{i,j})} f\circ \psi_{\alpha, i, j}(x).
\]
The corresponding operator on the  $n$-th level is
$$
G^{(n)}_{\alpha}f(x)=
\sum\limits_{(\underline{i}_n,\underline{j}_n)\in \mathcal{E}_n; x\in \Pi_\alpha(K_{\ien,\jen})} f\circ\psi _{\alpha ,\underline{i}_n,\underline{j}_n}(x).
$$
In particular for any $H\subset \Delta^\alpha $ we have
\begin{equation*}\label{70}
 G_{\alpha}^{(n)}\ind_H(x)=
 \#\left\{(\underline{i}_n,\underline{j}_n)\in \mathcal{E}_n:
 x\in \Pi _\alpha \left(\varphi _{\underline{i}_n,\underline{j}_n}(H)\right)\right\}.
\end{equation*}
Although $G^{(n)}_\alpha$ should not be thought of as the $n$-th iterate of $G_\alpha$, the expected value of $G^{(n)}_\alpha$
is the $n$-th iterate of the expected value of $G_\alpha$. Namely, let
\begin{equation*}\label{92}
F_\alpha=\mathbb{E}\left[G_{\alpha}\right] \mbox{ and } F_\alpha^n=\mathbb{E}\left[G_{\alpha}^n\right]
\end{equation*}
We then have the formulas

\[
F_\alpha f(x) = \sum_{i,j; x\in \Pi_\alpha(K_{i,j})} p_{i,j}\cdot f\circ \psi_{\alpha, i, j}(x)
\]
and

\begin{equation*}\label{3}
F^n_\alpha f(x)=\sum\limits_{
(\underline{i}_n,\underline{j}_n); x\in \Pi_\alpha(K_{\ien,\jen})}
p_{\underline{i}_n,\underline{j}_n}\cdot f\circ \psi _{\alpha ,\underline{i}_n,\underline{j}_n}(x),
\end{equation*}

where

\[
p_{\ien, \jen} = \prod_{k=1}^n p_{i_k, j_k}.
\]
Hence, $F_\alpha^n$ is indeed the $n$-th iteration of $F_\alpha$ (which explains why we are allowed to use this notation).

\begin{definition} \label{def:conda}
We say the percolation model satisfies {\bf Condition A($\alpha$)} if there exist closed intervals $I_1^\alpha, I_2^\alpha\subset \Delta_\alpha$ and a positive integer $r_\alpha$ such that
\begin{itemize}
\item[i)] $I_1^\alpha\subset {\rm int}I_2^\alpha, I_2^\alpha \subset {\rm int}\Delta_\alpha$,
\item[ii)] $F_\alpha^{r_\alpha}\ind_{I_1^\alpha} \geq 2 \ind_{I_2^\alpha}$.
\end{itemize}
\end{definition}

It will be convenient to use additional notation. For $x\in\Delta_\alpha$, $\alpha\in \mathcal{D}$, and $I\subset \Delta_\alpha$ we denote

\[
D_n(x,I, \alpha) = \{(\ien, \jen); x\in \Pi_\alpha\circ \varphi_{\ien, \jen}(I)\}
.
\]
That is, if we write $\ell^\alpha (x)$ for the line segment through $x\in \Delta_\alpha  $ in direction $\alpha $, $D_n(x,I, \alpha)$ is the set $(\underline{i}_n,\underline{j}_n)$
for which $\ell^\alpha (x)$ intersects $\varphi_{\ien, \jen}(I)$.

The point ii) of Definition \ref{def:conda} can then be written as

\[
\forall_{x\in I_2^\alpha} \ \sum_{(\underline{i}_{r_\alpha}, \underline{j}_{r_\alpha}) \in D_{r_\alpha}(x, I^\alpha _1, \alpha)} p_{\underline{i}_{r_\alpha}, \underline{j}_{r_\alpha}} \geq 2.
\]

\begin{figure}[ht!]
  \includegraphics[width=8cm]{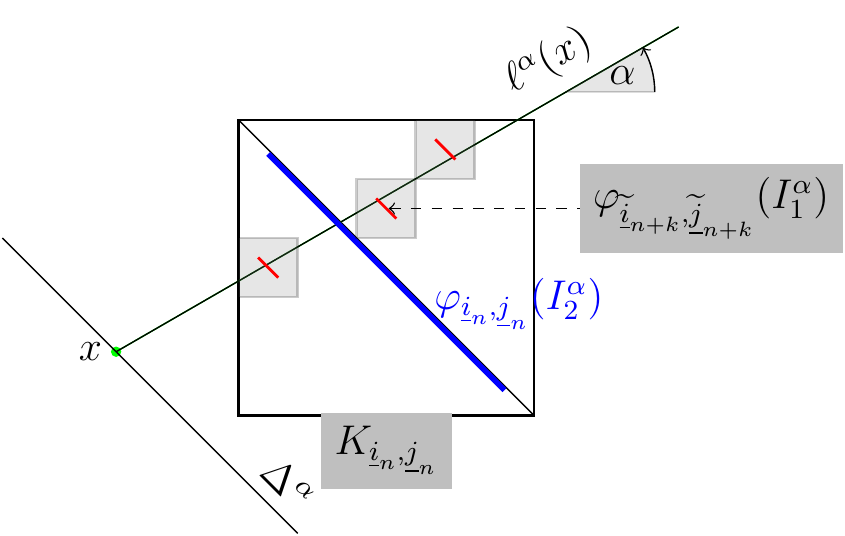}\\

 \caption{Condition $A(\alpha )$}\label{217}
\end{figure}
The heuristic explanation of Condition $A(\alpha )$ is as follows:
 If $(\underline{i}_n,\underline{j}_n)\in \mathcal{E}_n\cap D_n(x,I^\alpha _2,\alpha )$
then the expected number of
 $(\widetilde{\underline{i}}_{n+r},\widetilde{\underline{j}}_{n+r})$ such that

$K_{\widetilde{\underline{i}}_{n+r},\widetilde{\underline{j}}_{n+r}}\subset K_{\underline{i}_n,\underline{j}_n}$ and $(\widetilde{\underline{i}}_{n+r},\widetilde{\underline{j}}_{
n+r})\in \mathcal{E}_{n+r}\cap D_{n+r}(x,I^\alpha _1,\alpha )$ is at least $2
$. See Figure \ref{217}.

\subsection{Robustness}

In this subsection we will explain a very simple geometric idea we will use constantly in the last three sections.

Consider two parallel lines $l_1$, $l_2$. On $l_1$ we have an interval $I$. Let $J$ be the image of $I$ under linear projection onto $l_2$ in direction $\theta$. Let $I'$ be a greater interval on $l_1$, containing $I$ together with some neighbourhood. Then not only the projection of $I'$ onto $l_2$ in direction $\theta$ will contain $J$, but also if we perturb $\theta$ sufficiently slightly, the resulting projection of $I'$ will still contain $J$. Applying to our situation, whenever $(\ien, \jen)\in D_n(x, I_1^\alpha , \alpha )$  we will have $(\ien, \jen)\in D_n(y, I^\alpha _2, \beta )$ for all $y$ sufficiently close to $x$ and $\beta $ sufficiently close to $\alpha $.

First application: robustness of Condition A($\alpha$).

\begin{figure}[ht!]
  \includegraphics[width=8cm]{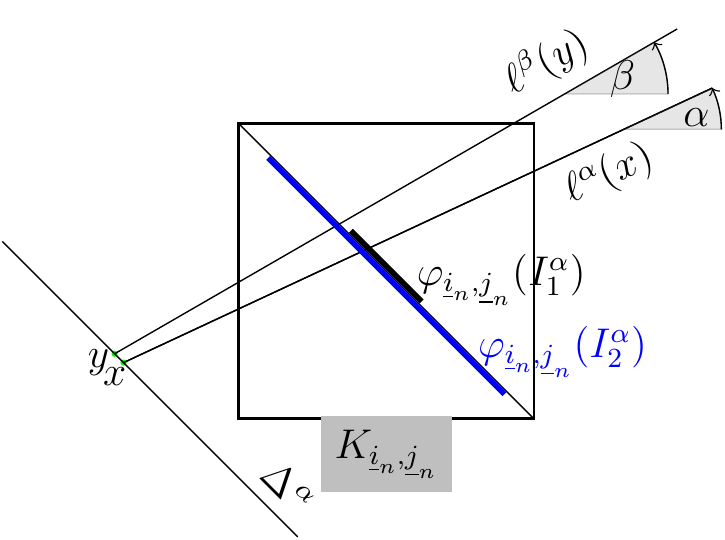}\\
 \caption{Robustness of Condition $A(\alpha )$}\label{218}
\end{figure}

\begin{proposition} \label{prop:arob}
If condition A($\alpha$) holds for some $\alpha\in\mathcal{D}$ for some $I_1^\alpha, I_2^\alpha$ and $r_\alpha$ then it will also hold in some neighbourhood $J\ni\alpha$. Moreover, for all $\theta\in J$ we can choose $I_1^\theta=I_1', I_2^\theta=I_2, r_\theta=r_\alpha$ not depending on $\theta$.
\end{proposition}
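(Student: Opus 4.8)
To prove Proposition \ref{prop:arob}, I would exploit the robustness idea stated just above it, making the geometric picture quantitative. The Condition A($\alpha$) we start from is a finite collection of inequalities: for every $x$ in the compact interval $I_2^\alpha$ we have
\[
\sum_{(\underline{i}_{r_\alpha}, \underline{j}_{r_\alpha}) \in D_{r_\alpha}(x, I^\alpha _1, \alpha)} p_{\underline{i}_{r_\alpha}, \underline{j}_{r_\alpha}} \geq 2.
\]
The strategy is to show that each summand persists under small perturbations of both the basepoint $x$ and the direction $\theta$, so that the whole sum stays at least $2$ after shrinking the outer interval, enlarging the inner interval, and moving the angle slightly. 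I keep $r_\theta = r_\alpha$ fixed throughout, so the level of refinement never changes.

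\textbf{Key steps.}
First I fix notation: let $I_1' \supset I_1^\alpha$ be a slightly larger closed interval, still contained in ${\rm int}\, I_2^\alpha$ (so $I_1^\alpha \subset {\rm int}\, I_1'$), and let $I_2 \subset I_2^\alpha$ be a slightly smaller closed interval with $I_1' \subset {\rm int}\, I_2$ and $I_2 \subset {\rm int}\,\Delta_\theta$ for $\theta$ near $\alpha$; this will guarantee inclusion condition i) uniformly on a neighbourhood. Second, I invoke the robustness observation: if $(\underline{i}_{r_\alpha}, \underline{j}_{r_\alpha}) \in D_{r_\alpha}(x, I_1^\alpha, \alpha)$, meaning $\ell^\alpha(x)$ meets $\varphi_{\underline{i}_{r_\alpha},\underline{j}_{r_\alpha}}(I_1^\alpha)$, then because $\varphi_{\underline{i}_{r_\alpha},\underline{j}_{r_\alpha}}(I_1')$ contains $\varphi_{\underline{i}_{r_\alpha},\underline{j}_{r_\alpha}}(I_1^\alpha)$ together with a neighbourhood of fixed relative size, a sufficiently small change of basepoint $x\mapsto y$ and of angle $\alpha\mapsto\theta$ leaves $\ell^\theta(y)$ still meeting $\varphi_{\underline{i}_{r_\alpha},\underline{j}_{r_\alpha}}(I_1')$. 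In other words, $(\underline{i}_{r_\alpha}, \underline{j}_{r_\alpha}) \in D_{r_\alpha}(y, I_1', \theta)$. Third, I make the perturbation uniform: since there are only finitely many index pairs at level $r_\alpha$ and only finitely many relevant contractions $\varphi_{\underline{i}_{r_\alpha},\underline{j}_{r_\alpha}}$, there is a single $\delta>0$ and a neighbourhood $J\ni\alpha$ so that the implication above holds for every summand simultaneously whenever $|y-x|<\delta$ and $\theta\in J$. Consequently $D_{r_\alpha}(x, I_1^\alpha, \alpha) \subseteq D_{r_\alpha}(y, I_1', \theta)$, and since all summands $p_{\underline{i}_{r_\alpha},\underline{j}_{r_\alpha}}$ are nonnegative, the sum over the larger index set is at least the original sum, hence $\geq 2$.

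\textbf{Covering and the main obstacle.}
Finally I transfer the pointwise bound from the old point $x$ to all points of the new interval $I_2$: every $y\in I_2$ lies within distance $\delta$ of some $x\in I_2^\alpha$ (for $I_2$ close enough to $I_2^\alpha$, indeed $I_2\subset I_2^\alpha$ suffices once $\delta$ is fixed, as $I_2^\alpha$ is the larger one), so
\[
\sum_{(\underline{i}_{r_\alpha}, \underline{j}_{r_\alpha}) \in D_{r_\alpha}(y, I_1', \theta)} p_{\underline{i}_{r_\alpha}, \underline{j}_{r_\alpha}} \geq 2
\]
for all $y\in I_2$ and all $\theta\in J$, which is exactly Condition A($\theta$) with $I_1^\theta=I_1'$, $I_2^\theta=I_2$, $r_\theta=r_\alpha$. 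The main obstacle I anticipate is organizing the uniformity correctly: the perturbation parameter $\delta$ and the angular neighbourhood $J$ must be chosen after the enlargement $I_1'$ is fixed and must work for the finitely many squares at once, and I must be careful that moving the angle $\theta$ also moves the diagonal $\Delta_\theta$ and the projection $\Pi_\theta$, so the claim ``$\ell^\theta(y)$ still meets $\varphi_{\underline{i}_{r_\alpha},\underline{j}_{r_\alpha}}(I_1')$'' should be verified as a statement about lines in the plane rather than about the moving codomain. This is precisely where the robustness lemma (two parallel lines, a fixed margin absorbing an angular tilt) does the work, and keeping track of it as a purely geometric continuity statement is the only delicate point.
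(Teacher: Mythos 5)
Your proposal is correct and follows essentially the same route as the paper: enlarge $I_1^\alpha$ by a fixed margin to an interval $I_1'$, observe that for the finitely many level-$r_\alpha$ squares this margin absorbs a small angular tilt, so $\Pi_\alpha\circ\varphi_{\underline{i}_{r_\alpha},\underline{j}_{r_\alpha}}(I_1^\alpha)\subset \Pi_\theta\circ\varphi_{\underline{i}_{r_\alpha},\underline{j}_{r_\alpha}}(I_1')$, and conclude $D_{r_\alpha}(x,I_1^\alpha,\alpha)\subseteq D_{r_\alpha}(x,I_1',\theta)$ with $r$ unchanged. Your extra perturbation of the basepoint $x\mapsto y$ and the shrinking of $I_2^\alpha$ are harmless but redundant here, since for $\theta$ in the same component of $\mathcal{D}$ the codomain diagonal (and hence $I_2^\alpha$) does not move, so the paper simply keeps $x$ and $I_2$ fixed.
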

\begin{proof}
Let $\delta$ be the Hausdorff distance between $I_1^\alpha$ and $I_2^\alpha$, i.e. the greatest number for which $\delta$-neighbourhood of $I_1^\alpha$ is still contained in $I_2^\alpha$. Let $I_1$ be $\delta/2$-neighbourhood of $I_1^\alpha$.

A simple geometric observation of robustness type is that if $|\alpha-\theta|<\delta M^r/3$ then

\[
\Pi_\alpha \circ \varphi_{\underline{i}_r, \underline{j}_r}(I_1^\alpha) \subset \Pi_\theta \circ \varphi_{\underline{i}_r, \underline{j}_r}(I_1).
\]
Hence, Condition A($\alpha$) holds for all $\theta\in [\alpha-\delta M^r/3, \alpha+\delta M^r/3]$ for intervals $I_1$, $I_2$ and positive integer $r$.
\end{proof}

A natural corollary is that the whole range $\mathcal{D}$ can be presented as a countable union of closed intervals $J_i=[\alpha_i^-, \alpha_i^+]$ such that Condition A($\alpha$) holds for all $\alpha\in J_i$ with the same $I_1^i, I_2^i, r_i$. To prove Theorem \ref{thm:many} we only need to prove that for almost all $E$ and for any $i$, almost surely all the sets $\Pi_\alpha(E), \alpha\in J_i$ contain intervals (the horizontal and vertical directions follow from Falconer and Grimmett \cite{FG}).

\subsection{The proof}

We assume in this section that Condition A($\alpha$) holds with given $I_1, I_2$ and $r$ ($\alpha$ is fixed, so we suppress index $\alpha$). We will prove that there is a positive probability that $\Pi_\alpha(E) \supset I_1$.

For any $x\in \Delta_\alpha$, let us define a sequence of random variables

\[
V_n(x) = \sharp \{(\underline{i}_{nr}, \underline{j}_{nr})\in \mathcal{E}_{nr} \cap D_{nr}(x, I_1, \alpha)\}.
\]
For any $n$, let us define a finite set $X_n\subset I_1$ with the following properties:
\begin{itemize}
\item[i)] $X_n$ contains the endpoints of $I_1$,
\item[ii)] when we number the points of $X_n$ in increasing direction as $x_0,\ldots,x_N$ (with $x_0, x_N$ being the endpoints of $I_1$) then whenever $(\underline{i}_{nr}, \underline{j}_{nr})\in D_{nr}(x_i, I_1, \alpha)$, it will follow that for all $y\in[x_{i-1}, x_{i+1}]$, $(\underline{i}_{nr}, \underline{j}_{nr})\in D_{nr}(y, I_2, \alpha)$,
\item[iii)] $\sharp X_n \leq c M^{nr}$.
\end{itemize}
To have property ii) satisfied, it is enough to choose $X_n$ as points in regular distances $\delta M^{-nr}$ from each other, where $\delta$ is sufficiently small that $\delta$-neighbourhood of $I_1$ is still contained in $I_2$. So constructed $X_n$ will satisfy iii) as well.

We will prove that there is a positive probability that for all $n\in \mathbb N$, for all $x\in X_n$ we have $V_n(x) \geq (\frac 3 2)^n$. Note that that will imply the assertion: when all the points from $X_n$ will be contained in some $\Pi_\alpha\circ \varphi_{\underline{i}_{nr}, \underline{j}_{nr}}(I_1), (\underline{i}_{nr}, \underline{j}_{nr})\in \mathcal{E}_{nr}$, ii) will imply that whole $I_1$ will be contained in the union of corresponding $\Pi_\alpha\circ \varphi_{\underline{i}_{nr}, \underline{j}_{nr}}(I_2)$ and in particular in the union of corresponding $\Pi_\alpha\circ K_{\underline{i}_{nr}, \underline{j}_{nr}}$.

For $n=0$ the statement holds with probability 1. Assume that up to time $n$ it holds with probability $P_n$ and let us estimate the conditional probability with which it holds at time $n+1$, conditioned on the assumption it holds at time $n$. Let $x\in X_{n+1}$.

1. The point $x$ does not need to belong to $X_n$. However, even if it does not, it is contained in some $[x_i, x_{i+1}]$ for $x_i, x_{i+1}\in X_n$. As we assume that $V_n(x_i) \geq (3/2)^n$, we know that the number of pairs $(\underline{i}_{nr}, \underline{j}_{nr})\in \mathcal{E}_{nr} \cap D_{nr}(x_i, I_1, \alpha)$ is at least $(3/2)^n$. By part ii) of definition of $X_n$, all those $(\underline{i}_{nr}, \underline{j}_{nr})$ belong to $\mathcal{E}_{nr} \cap D_{nr}(x, I_2, \alpha)$ as well.

2. For each square $K_{\underline{i}_{nr}, \underline{j}_{nr}}, (\underline{i}_{nr}, \underline{j}_{nr})\in \mathcal{E}_{nr} \cap D_{nr}(x, I_2, \alpha)$ we want to calculate the number of its subsquares $K_{\underline{i}_{(n+1)r}, \underline{j}_{(n+1)r}}, (\underline{i}_{(n+1)r}, \underline{j}_{(n+1)r})\in \mathcal{E}_{(n+1)r} \cap D_{nr}(x, I_1, \alpha)$. This random number is given by

\[
G_\alpha^{(r)} \ind_{I_1}(\psi_{\underline{i}_{nr}, \underline{j}_{nr}}(x)).
\]
We do not know exactly the distribution of this random variable (it depends on $x$). But the possible values are obviously between 0 and $2M^{r}$ and the expected value is not smaller than

\[
F_\alpha^{r} \ind_{I_1}(\psi_{\underline{i}_{nr}, \underline{j}_{nr}}(x)) \geq 2
\]
(by Condition A($\alpha$) and using the fact that $\psi_{\underline{i}_{(n+1)r}, \underline{j}_{(n+1)r}}(x) \in I_2$).

3. Events that happen in different squares $K_{\underline{i}_{nr}, \underline{j}_{nr}}$ are jointly independent.

4. Hence, $V_{n+1}(x)$ is bounded from below by a sum of at least $(3/2)^n$ independent random variables, each with average $2$ and each bounded above and below by uniform constants. Hence, by Azuma-Hoeffding inequality \cite{Hoff} probability that this sum is strictly smaller than $(3/2)^{n+1}$ is not greater than $\gamma^{(3/2)^n}$ for some fixed $\gamma\in (0,1)$.

What we said implies that

\[
P\left(\forall_{x\in X_{n+1}} V_{n+1}(x)\geq (3/2)^{n+1} | \forall_{y\in X_n} V_n(y)\geq (3/2)^n\right) \geq \left(1-\gamma^{(3/2)^n}\right)^{cM^{(n+1)r}}.
\]

As the infinite sum $\sum_n cM^{(n+1)r}\gamma^{(3/2)^n}$ is convergent, we get

\[
P\left(\forall_n \forall_{x\in X_n} V_n(x)\geq (3/2)^n\right) >0.
\]
We are done.

\subsection{Examples}

Condition A($\alpha$) looks artificial, hence we should show some examples. The main goal of this subsection is to show that if all probabilities $p_{i,j}=p>M^{-1}$ then A($\alpha$) holds for all $\alpha\in\mathcal{D}$ (Proposition \ref{prop:alpha}), but we also mention some examples with different probabilities. Our main tool will be the following.

\begin{definition}
We say that the fractal percolation model satisfies {\bf Condition B($\alpha$)} if there exists a nonnegative continuous function $f:\Delta_\alpha\to \R$ such that $f$ is strictly positive except at the endpoints of $\Delta_\alpha$ and that
\begin{equation} \label{25}
F_\alpha f \geq (1+\varepsilon)f
\end{equation}
for some $\varepsilon>0$.
\end{definition}

First we prove
\begin{lemma}\label{29}
Assume that Condition B($\alpha$) holds for some $f$ and $\varepsilon >0$. Then we can choose nonempty closed intervals
\begin{equation*}\label{32}
I_1\subset {\rm int}I_2 \mbox{ and }I_2\subset {\rm int}\Delta,
\end{equation*}
such that for
\begin{equation*}\label{30}
g_{1}=f|_{I_{1}},\
g_{2}=f|_{I_{2}}
\end{equation*}
we have
\begin{equation}\label{31}
F_\alpha {g}_1(x)\geq\left(1+\frac{\varepsilon
}{2}\right)\cdot {g}_{2}(x)\mbox{ for } x\in
I_{2}.
\end{equation}
\end{lemma}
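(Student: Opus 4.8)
The plan is to exploit linearity of $F_\alpha$ together with the strict gap between the factors $1+\varepsilon$ and $1+\varepsilon/2$. Writing $f=g_1+(f-g_1)$ with $f-g_1=f\cdot\ind_{\Delta\setminus I_1}$, linearity of $F_\alpha$ and Condition B($\alpha$) give
\[
F_\alpha g_1(x)=F_\alpha f(x)-F_\alpha\!\left(f\cdot\ind_{\Delta\setminus I_1}\right)(x)\ge (1+\varepsilon)f(x)-F_\alpha\!\left(f\cdot\ind_{\Delta\setminus I_1}\right)(x).
\]
So it suffices to choose nested intervals $I_1\subset{\rm int}\,I_2\subset{\rm int}\,\Delta$, both close to $\Delta$, so that the ``lost mass'' $F_\alpha(f\cdot\ind_{\Delta\setminus I_1})(x)$ stays below $F_\alpha f(x)-(1+\varepsilon/2)f(x)$ for every $x\in I_2$.

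First I would record a uniform bound on the lost mass. For fixed $x$ the segment $\ell^\alpha(x)$ meets at most $2M$ of the level-one squares, so each of these sums has at most $2M$ terms, and $p_{i,j}\le 1$. Since $f$ is continuous and vanishes at the two endpoints of $\Delta$, the number $\eta_1:=\sup_{\Delta\setminus I_1}f$ tends to $0$ as $I_1\uparrow\Delta$; hence $F_\alpha(f\cdot\ind_{\Delta\setminus I_1})(x)\le 2M\eta_1$ uniformly in $x$. On the part of $I_2$ where $f(x)\ge 4M\eta_1/\varepsilon$ this already yields $F_\alpha g_1(x)\ge(1+\varepsilon)f(x)-2M\eta_1\ge(1+\varepsilon/2)f(x)$, as required.

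The delicate region is the shrinking neighbourhood of $\partial\Delta$ where $f(x)$ is itself comparable to $\eta_1$ and the crude subtraction is useless. Here I would use that $F_\alpha f$ is \emph{continuous} on the closed interval $\Delta$: the only candidate discontinuities sit at the endpoints of the shadows $\Pi_\alpha(K_{i,j})$, but at such a point $\psi_{\alpha,i,j}$ sends $x$ to an endpoint of $\Delta$, where $f$ vanishes, so every jump is killed. Thus $F_\alpha f$ takes a genuine value at each endpoint of $\Delta$, produced by the squares whose shadow overlaps that endpoint while stretching $x$ across the bulk of $\Delta$. As long as this value is positive -- equivalently, as long as for $x$ near $\partial\Delta$ there is a square $K_{i,j}$ with $p_{i,j}>0$ carrying $\psi_{\alpha,i,j}(x)$ into ${\rm int}\,I_1$, where $f\ge\min_{I_1}f>0$ -- the surviving term alone bounds $F_\alpha g_1(x)$ below by a fixed positive constant, which dominates the vanishing quantity $(1+\varepsilon/2)f(x)$. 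Concretely I would take $I_2$ to be $\Delta$ with an $s$-neighbourhood of each endpoint deleted and $I_1$ the same with a $2s$-neighbourhood deleted, $s>0$ small; then $I_1\subset{\rm int}\,I_2\subset{\rm int}\,\Delta$, both exhaust $\Delta$ as $s\to 0$ (so $\eta_1\to 0$), the second paragraph handles the bulk of $I_2$, and the present argument handles the two end-zones.

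I expect the main obstacle to be precisely this endpoint analysis: because $f(x)$ and the lost mass both degenerate as $x\to\partial\Delta$, they cannot be separated by any uniform multiplicative constant, and one must fall back on the non-degeneracy of $F_\alpha f$ at $\partial\Delta$ -- the fact that the shadows reaching a near-endpoint point stretch it into the interior, so that restricting $f$ to $I_1$ discards only small-valued terms while keeping a bulk-valued one. Making this quantitative is the single step that genuinely uses the geometry of the projection rather than mere linearity and continuity, and it is where the covering behaviour of the level-one shadows of the model enters.
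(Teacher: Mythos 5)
Your first regime is sound: the decomposition $F_\alpha g_1=F_\alpha f-F_\alpha\bigl(f\cdot\ind_{\Delta\setminus I_1}\bigr)$ together with the crude bound $2M\eta_1$, $\eta_1=\sup_{\Delta\setminus I_1}f$, settles all $x\in I_2$ with $f(x)\geq 4M\eta_1/\varepsilon$. The gap is exactly where you flagged it, and as written the end-zone argument fails quantitatively. You bound the surviving term below by $p_{i,j}\min_{I_1}f$ and claim this ``fixed positive constant'' dominates $(1+\varepsilon/2)f(x)$ on the delicate region; but with your intervals the constant is \emph{not} large compared with the threshold defining that region. Indeed the boundary point of $I_1$ (at distance $2s$ from an endpoint) lies in the closure of $\Delta\setminus I_1$, so $\min_{I_1}f\leq\eta_1$, while the delicate region reaches up to $f(x)$ just below $4M\eta_1/\varepsilon$. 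Domination there would force $p_{i,j}\geq(1+\varepsilon/2)\cdot 4M/\varepsilon=4M/\varepsilon+2M>1$, which is impossible; and since $f$ attains on $I_2$ every value between $\eta_1$ and $4M\eta_1/\varepsilon$ (intermediate value theorem, $s$ small), the uncovered band is genuinely nonempty. Both $\min_{I_1}f$ and $\eta_1$ are of the same order, so no choice of $s$ separates them by the required factor --- this is precisely the step you conceded you had not made quantitative. (Also, the square with $p_{i,j}>0$ covering the end-zone is only assumed; it does follow from Condition B($\alpha$), since near an endpoint only the corner square contributes to $F_\alpha f$, but that needs to be said.)

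The paper escapes this by choosing $I_1,I_2$ so that \emph{wherever any mass is lost, $f(x)$ is bounded below by a constant independent of the cutoff}. Let $W$ be the $\Pi_\alpha$-image of the $1/M$-grid points, $W_0$ the two endpoints of $\Delta$, $W_1=W\setminus W_0$, and set $I_1=\Delta\setminus B_\eta(W_0)$, $I_2=\Delta\setminus B_{\eta/M}(W_0)$. Since $\psi_{\alpha,i,j}$ expands by $M$ and the endpoints of each shadow $\Pi_\alpha(K_{i,j})$ lie in $W$, a pulled-back point $\psi_{\alpha,i,j}(x)$ can land within $\eta$ of an endpoint of $\Delta$ only if $x\in B_{\eta/M}(W)$; for $x\in I_2$ this forces $x\in B_{\eta/M}(W_1)$, a neighbourhood of a \emph{fixed finite set of interior points}, where $f\geq\min_{B_{\eta/M}(W_1)}f$ stays bounded away from $0$ as $\eta\to0$, while the loss is at most $(M+1)^2\sup_{B_\eta(W_0)}f\to0$. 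So the delicate comparison is between a truly fixed constant and a vanishing quantity --- exactly what your setup lacks, because your delicate region sits where $f$ itself degenerates. For $x\in I_2\setminus B_{\eta/M}(W)$ there is no loss at all and Condition B($\alpha$) applies verbatim. Incidentally, your end-zones could be repaired by a different observation: for $x\in I_2$ within distance $c(\alpha)$ of an endpoint, only the corner square's shadow contains $x$, and since $M\geq2$ it sends $x$ to distance $M\,\mathrm{dist}(x,e)\geq 2s$ from the endpoint, i.e., into $I_1$ --- so there is \emph{no} lost mass there and the raw inequality $F_\alpha g_1(x)=F_\alpha f(x)\geq(1+\varepsilon)f(x)$ holds; but that is not the argument you wrote.
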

\begin{proof}


For a set
$H\subset \Delta_\alpha$, put $B_r(H)$ for the radius $r$ open
neighborhood of $H$ in $\Delta_\alpha$.
$$
B_r(H):=\left\{y\in \Delta_\alpha :\exists h\in H,\
|h-y|<r \right\}.
$$
Let $W\subset \Delta_\alpha $ be the $\Pi_\alpha$-projection of the mesh $1/M$ grid points in $K$:
$$
W=\left\{x\in \Delta :
\exists 0\leq i,j\leq M,\ x=\Pi_\alpha\left(\frac{i}{M},\frac{j}{M}\right)\right\}.
$$
We partition $W$ into the two endpoints of $\Delta$
(to be denoted $W_0$)
and $W_1:=W\setminus W_0$.
Fix $\eta >0$ which satisfies
\begin{equation*}\label{39}
\frac{\varepsilon }{2}\cdot   \min_{x\in B_{\eta/M}
(W_1)}f(x)>(M+1)^2\sup_x\left\{f(x):x\in B_{\eta /M}(W_0)\right\}.
\end{equation*}

and define two subintervals of $\Delta_\alpha $
\begin{equation*}\label{35}
 I_{1} :=\Delta_\alpha   \setminus B_\eta (W_0) \mbox{ and }
 I_2:=\Delta_\alpha   \setminus B_{\eta/M} (W_0).
\end{equation*}
Let
\begin{equation*}\label{34}
B=B_{\eta /M}(W) \mbox{ and }   B_i=B_{\eta /M}(W_i),i=0,1.
\end{equation*}
Fix an arbitrary  $x\in I_{2}$.
We  divide the proof of (\ref{31}) into two cases among which the first is obvious:
\begin{description}
\item[$\mathbf{x\in \Delta \setminus B}$]
Using the definition of $F_\alpha$ and then (\ref{25}) we obtain
$$
F_\alpha g_1(x)=F_\alpha f(x)\geq (1+\varepsilon )f(x)\geq
\left(1+\frac{\varepsilon }{2}\right)\cdot g_2(x).
$$
\item[$\mathbf{x\in B_1}$] By the definition of $F_\alpha $:
\begin{equation*}\label{36}
 F_\alpha g_1(x)\geq F_\alpha f-(M+1)^2\|f-\widetilde{g}_1\|_\infty,\  \forall x\in \Delta.
\end{equation*}
\end{description}
From this and from \eqref{25}, we obtain
$$
F_\alpha g_1(x)\geq \left(1+\frac{\varepsilon }{2}
\right)f(x)+\left(\frac{\varepsilon }{2}f(x)-
(M+1)^2\|f-\widetilde{g}_1\|_\infty\right)
$$
The definition of $\eta $ yields that the expression in the second bracket is positive. This implies that
$$
F_\alpha g_1(x)>
\left(1+\frac{\varepsilon }{2} \right)g_2(x)
\mbox{ for } x\in \Delta _2.
$$
\end{proof}

\begin{proposition} \label{prop:ba}
B($\alpha$) implies A($\alpha$).
\end{proposition}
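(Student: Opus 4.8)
The plan is to bootstrap the single-step inequality furnished by Lemma \ref{29} into the iterated lower bound demanded by Condition A($\alpha$), using nothing more than the fact that $F_\alpha$ is a positive (hence monotone) linear operator on nonnegative functions, which is immediate from the formula for $F_\alpha$ since every $p_{i,j}\geq 0$. First I would invoke Lemma \ref{29} to produce closed intervals $I_1\subset \mathrm{int}\,I_2$, $I_2\subset \mathrm{int}\,\Delta_\alpha$, together with the truncations $g_1=f|_{I_1}$ and $g_2=f|_{I_2}$ (each equal to $f$ on the respective interval and to $0$ outside), satisfying $F_\alpha g_1\geq (1+\varepsilon/2)\,g_2$ on $I_2$. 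These very intervals will witness A($\alpha$); point i) of Definition \ref{def:conda} is inherited verbatim from the Lemma, and the only thing left to manufacture is the integer $r_\alpha$.

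The key observation is that the local inequality on $I_2$ can be promoted to a \emph{global} self-improvement bound that is stable under iteration. Since $I_1\subset I_2$ and $f\geq 0$, we have $g_1\leq g_2$ pointwise; hence on $I_2$ we get $F_\alpha g_1\geq (1+\varepsilon/2)\,g_2\geq (1+\varepsilon/2)\,g_1$, while off $I_2$ we have $g_1=0\leq F_\alpha g_1$ trivially because $g_1$ is supported in $I_1\subset I_2$. Therefore $F_\alpha g_1\geq (1+\varepsilon/2)\,g_1$ on all of $\Delta_\alpha$. Applying the monotonicity of $F_\alpha$ repeatedly yields $F_\alpha^{\,n} g_1\geq (1+\varepsilon/2)^n g_1$ for every $n$; feeding the $(n-1)$-st instance of this into one final application of the sharper estimate gives $F_\alpha^{\,n} g_1\geq (1+\varepsilon/2)^{n-1}F_\alpha g_1\geq (1+\varepsilon/2)^n g_2$ on $I_2$.

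To convert this into the indicator form of point ii), I would use that $f$ is continuous and strictly positive on the compact sets $I_1,I_2\subset \mathrm{int}\,\Delta_\alpha$. Setting $m_2=\min_{I_2}f>0$ and $M_1=\max_{I_1}f>0$, we have $g_2\geq m_2\,\ind_{I_2}$ and $\ind_{I_1}\geq M_1^{-1}g_1$, so by monotonicity $F_\alpha^{\,n}\ind_{I_1}\geq M_1^{-1}F_\alpha^{\,n}g_1\geq M_1^{-1}m_2(1+\varepsilon/2)^n\,\ind_{I_2}$. Finally I would choose $r_\alpha$ large enough that $M_1^{-1}m_2(1+\varepsilon/2)^{r_\alpha}\geq 2$, which is possible because the prefactor grows geometrically, obtaining $F_\alpha^{\,r_\alpha}\ind_{I_1}\geq 2\,\ind_{I_2}$, exactly Condition A($\alpha$).

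The one genuinely delicate point is the promotion from the local estimate on $I_2$ to a bound that survives iteration: one cannot iterate the $g_2$-inequality directly, because $F_\alpha g_1\geq (1+\varepsilon/2)g_2$ is asserted only on $I_2$ and $g_2$ carries mass on $I_2\setminus I_1$, where $g_1$ vanishes. The trick that resolves this is the comparison $g_1\leq g_2$, which collapses the two-interval inequality into the single-function inequality $F_\alpha g_1\geq (1+\varepsilon/2)g_1$ valid everywhere; this is what monotonicity can iterate, with the $g_2$-version held in reserve for the last step to recover a lower bound over all of $I_2$ rather than merely over $I_1$.
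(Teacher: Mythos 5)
Your proof is correct and takes essentially the same route as the paper's: both invoke Lemma \ref{29}, iterate its inequality geometrically, and choose $r_\alpha$ exactly so that $\left(1+\frac{\varepsilon}{2}\right)^{r_\alpha}\geq 2\,\max_{I_1}f/\min_{I_2}f$, which is precisely the paper's definition of $r$. The only difference is that you spell out, via the comparison $g_1\leq g_2$ and the positivity (monotonicity) of $F_\alpha$, the iteration step that the paper compresses into a single ``Then clearly''.
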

\begin{proof}
Using the notation of Lemma \ref{29} we define $r$ as the smallest integer satisfying
\begin{equation*}\label{37}
 \left(1+\frac{\varepsilon }{2}\right)^r\geq
 2\cdot \frac{\max\limits_{x\in I_{1}}g_{1}(x)}
 {\min\limits_{x\in I_2}g_2(x)}
\end{equation*}
Then clearly,
$$
F_\alpha^r\ind_{I_1}(x)\geq 2\cdot
\ind_{I_2}(x) \mbox{  for all }  x\in I_{2}.
$$
\end{proof}

\begin{proposition} \label{prop:alpha}
If
\begin{equation*}\label{23}
 \forall i,j \qquad
 p_{ij}=p>\frac{1}{M}
\end{equation*}
then Condition A($\alpha$) is satisfied for all $\alpha\in\mathcal{D}$.
\end{proposition}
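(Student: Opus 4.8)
By Proposition \ref{prop:ba} it is enough to produce, for each fixed $\alpha\in\mathcal{D}$, a witness for Condition B($\alpha$): a continuous $f:\Delta_\alpha\to\R$, strictly positive off the two endpoints of $\Delta_\alpha$, with $F_\alpha f\geq(1+\varepsilon)f$ for some $\varepsilon>0$. When all $p_{i,j}=p$ the expectation operator specializes to
\[
F_\alpha f(x)=p\sum_{(i,j):\,x\in\Pi_\alpha(K_{i,j})} f\circ\psi_{\alpha,i,j}(x),
\]
so the whole problem becomes one of finding a subinvariant function for this weighted sum-over-preimages operator.

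The plan is to recognize $F_\alpha$ as a rescaled transfer operator and read off the eigenvalue from $pM$. Each $\psi_{\alpha,i,j}$ is linear and expanding by the factor $M$, and since $|\Pi_\alpha(K_{i,j})|=|\Delta_\alpha|/M$ the map $\psi_{\alpha,i,j}$ sends its domain $\Pi_\alpha(K_{i,j})$ \emph{onto} all of $\Delta_\alpha$. Thus $\{\psi_{\alpha,i,j}\}$ is an expanding Markov system whose branches cover $\Delta_\alpha$ with multiplicity $N_\alpha(x):=\#\{(i,j):x\in\Pi_\alpha(K_{i,j})\}$, the number of level-$1$ cells met by the line $\ell^\alpha(x)$. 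Writing $\mathcal{L}_\alpha f(x)=M^{-1}\sum_{(i,j):\,x\in\Pi_\alpha(K_{i,j})} f\circ\psi_{\alpha,i,j}(x)$ for the normalized transfer operator (the weight $M^{-1}=|\psi_{\alpha,i,j}'|^{-1}$), a change of variables gives $\int_{\Delta_\alpha}\mathcal{L}_\alpha f=\int_{\Delta_\alpha} f$, so $\mathcal{L}_\alpha$ has spectral radius $1$ and a nonnegative invariant density $h$ with $\mathcal{L}_\alpha h=h$. Since $F_\alpha=pM\,\mathcal{L}_\alpha$, this $h$ satisfies $F_\alpha h=pM\,h=(1+\varepsilon)h$ with $\varepsilon=pM-1>0$, which is exactly inequality \eqref{25}. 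It therefore remains to check that $h$ can be taken continuous, positive in $\mathrm{int}\,\Delta_\alpha$, and vanishing at the endpoints.

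The vanishing at the endpoints is forced by the local geometry. At an endpoint $a$ of $\Delta_\alpha$ the extreme line $\ell^\alpha(a)$ is tangent to $K$ at a corner, so the only cell with $a\in\Pi_\alpha(K_{i,j})$ is the corner square, and its branch $\psi_{\alpha,i,j}$ fixes $a$. Hence $\mathcal{L}_\alpha h(a)=M^{-1}h(a)$, and $\mathcal{L}_\alpha h=h$ gives $h(a)=0$ automatically. For positivity in the interior I would use the geometric count: for $x$ in a central subinterval of $\Delta_\alpha$ the line $\ell^\alpha(x)$ spans the square from side to opposite side (left–right for shallow $\alpha$, bottom–top for steep $\alpha$), whence $N_\alpha(x)\geq M$. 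This overlap makes the Markov system topologically mixing, so Perron–Frobenius irreducibility yields $h>0$ on $\mathrm{int}\,\Delta_\alpha$; the same count gives $F_\alpha\ind_{\Delta_\alpha}\geq pM>1$ in the interior, which is the quantitative reason the construction has room to work.

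The main obstacle is the regularity of $f$ together with the delicacy of the near-endpoint balance, and both stem from the fact that $pM$ exceeds $1$ only slightly. The invariant density $h$ is a priori only of bounded variation, with possible jumps exactly at the projections of the grid points — the set $W$ from Lemma \ref{29} — so Condition B($\alpha$), which demands a \emph{continuous} $f$, is not immediately met. I would resolve this in one of two ways: either mollify $h$ and accept a continuous subeigenfunction with $\varepsilon$ taken slightly below $pM-1$ (the spectral gap leaves room), or build $f$ directly by hand, taking $f$ linear near each endpoint — where only the corner branch matters, so $F_\alpha f(x)\geq p\,f\bigl(a+M(x-a)\bigr)=pM\,f(x)$ — and then applying $F_\alpha$, which preserves vanishing at the endpoints, to flatten it toward the interior. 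The genuinely hard step is verifying $F_\alpha f\geq(1+\varepsilon)f$ across the transition between the corner regime and the interior regime: there the preimages $\psi_{\alpha,i,j}(x)$ spread across the whole of $\Delta_\alpha$, so contributions from preimages landing near the endpoints (where $f$ is small) must be controlled against the small margin $pM-1$, which is precisely the point where the naive choice $f(x)=\mathrm{dist}(x,\partial\Delta_\alpha)$ fails.
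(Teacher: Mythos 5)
You have correctly identified the structure of the problem --- indeed $F_\alpha = pM\,\mathcal{L}_\alpha$ for an integral-preserving operator, and the function you want is precisely the eigenfunction with eigenvalue $pM$ --- but your argument has a genuine gap, which you yourself flag: you never actually produce a continuous $f$ satisfying \eqref{25}. The abstract route is shakier than you suggest. Your $\mathcal{L}_\alpha$ is \emph{not} the transfer operator of a single piecewise expanding interval map, because the domains $\Pi_\alpha(K_{i,j})$ overlap, so the Lasota--Yorke/BV machinery you implicitly invoke (``a priori only of bounded variation, with possible jumps at $W$'') does not apply off the shelf; a Krylov--Bogolyubov-type limit yields only an invariant \emph{measure}, and your endpoint computation $\mathcal{L}_\alpha h(a)=M^{-1}h(a)$ presupposes that the fixed-point equation holds pointwise at $a$, which is meaningless for an a.e.-defined density. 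Both of your proposed repairs are then abandoned by your own admission exactly at the ``transition between the corner regime and the interior regime'' (and mollifying $h$ is itself dangerous near the endpoints, where the margin you would spend is not uniform because $f$ tends to $0$ there). As written, the proof is incomplete at its decisive step.

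The missing idea is that the invariant density you postulate can simply be written down, and it is continuous: take $f_\alpha(x):=|\ell^\alpha(x)\cap K|$, the length of the chord of $K$ cut by the line through $x\in\Delta_\alpha$ in direction $\alpha$ --- this is the paper's entire proof. Decomposing the chord over the level-one squares and rescaling by $\varphi_{i,j}^{-1}$ (a homothety of ratio $M$, which maps $\ell^\alpha(x)\cap K_{i,j}$ onto $\ell^\alpha\bigl(\psi_{\alpha,i,j}(x)\bigr)\cap K$) gives the exact pointwise identity
\[
f_\alpha(x)=\frac{1}{M}\sum_{(i,j):\,x\in\Pi_\alpha(K_{i,j})} f_\alpha\circ\psi_{\alpha,i,j}(x),
\qquad\text{hence}\qquad F_\alpha f_\alpha = pM\, f_\alpha
\]
on all of $\Delta_\alpha$, with equality rather than an approximate inequality --- so there is no transition region to control, and \eqref{25} holds with $\varepsilon=pM-1>0$. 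Since $f_\alpha$ is piecewise linear, hence continuous, strictly positive on ${\rm int}\,\Delta_\alpha$, and zero at the endpoints (the extreme lines meet $K$ only in a corner), it is a legitimate witness for Condition B($\alpha$), and Proposition \ref{prop:ba} --- your reduction, which matches the paper's --- finishes the proof. In short: the $h$ you sought exists and equals $f_\alpha$ up to normalization, but establishing that abstractly is both harder than you allow and unnecessary.
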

\begin{proof}
We will actually prove B($\alpha$).
Fix $\alpha \in \mathcal{D}$. For
an arbitrary $x\in \Delta^\alpha$ we define $f_\alpha (x):=|\ell^\alpha (x)\cap K|$ . It is straightforward that $f_\alpha$
satisfies (\ref{25}) with $\varepsilon =M\cdot p-1>0$.
\end{proof}

Let us now give some examples of percolations with not all probabilities equal and still satisfying Condition A($\alpha$). There is a large class of trivial examples given by the following lemma.

\begin{lemma}
If the percolation $\{p_{i,j}\}$ satisfies Condition ($\alpha$) and $p_{i,j}'\geq p_{i,j}$ for all $i,j$ then the percolation $\{p_{i,j}'\}$ satisfies Condition ($\alpha$) as well.
\end{lemma}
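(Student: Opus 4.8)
The plan is to observe that Condition A($\alpha$) is a statement purely about the deterministic expectation operator $F_\alpha$, namely the inequality $F_\alpha^{r_\alpha}\ind_{I_1^\alpha}\geq 2\,\ind_{I_2^\alpha}$ together with the geometric nesting conditions i), and that this operator depends monotonically on the probabilities $\{p_{i,j}\}$. Write $F_\alpha$ and $F_\alpha'$ for the expectation operators attached to $\{p_{i,j}\}$ and $\{p_{i,j}'\}$ respectively. Since the intervals $I_1^\alpha, I_2^\alpha$ and the integer $r_\alpha$ are purely geometric objects that do not involve the probabilities at all, the entire task reduces to showing that the \emph{same} $I_1^\alpha, I_2^\alpha, r_\alpha$ witness Condition A($\alpha$) for the larger probabilities, i.e. that $(F_\alpha')^{r_\alpha}\ind_{I_1^\alpha}\geq 2\,\ind_{I_2^\alpha}$.

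The argument rests on two elementary facts, both understood on the cone of nonnegative functions on $\Delta_\alpha$. First, from the defining formula
\[
F_\alpha' f(x) = \sum_{i,j;\, x\in \Pi_\alpha(K_{i,j})} p_{i,j}'\cdot f\circ \psi_{\alpha, i, j}(x) \geq \sum_{i,j;\, x\in \Pi_\alpha(K_{i,j})} p_{i,j}\cdot f\circ \psi_{\alpha, i, j}(x) = F_\alpha f(x),
\]
valid for every nonnegative $f$, because the index set of the sum is identical for both operators and each summand is a nonnegative coefficient $p_{i,j}'\geq p_{i,j}$ times a nonnegative value $f\circ\psi_{\alpha,i,j}(x)\geq 0$. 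Second, each of $F_\alpha$ and $F_\alpha'$ is a monotone operator: if $u\geq v\geq 0$ then $u\circ\psi_{\alpha,i,j}\geq v\circ\psi_{\alpha,i,j}$ termwise, whence $F_\alpha' u\geq F_\alpha' v$ and likewise for $F_\alpha$.

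These two facts combine by a straightforward induction to give $(F_\alpha')^n f\geq F_\alpha^n f$ for every nonnegative $f$ and every $n$. Indeed, assuming $(F_\alpha')^{n-1} f\geq F_\alpha^{n-1} f\geq 0$, we have
\[
(F_\alpha')^n f = F_\alpha'\big((F_\alpha')^{n-1} f\big) \geq F_\alpha'\big(F_\alpha^{n-1} f\big) \geq F_\alpha\big(F_\alpha^{n-1} f\big) = F_\alpha^n f,
\]
where the first inequality uses the monotonicity of $F_\alpha'$ applied to the inductive hypothesis and the second uses the pointwise domination applied to the nonnegative function $F_\alpha^{n-1}f$. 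Taking $f=\ind_{I_1^\alpha}$ and $n=r_\alpha$ then yields $(F_\alpha')^{r_\alpha}\ind_{I_1^\alpha}\geq F_\alpha^{r_\alpha}\ind_{I_1^\alpha}\geq 2\,\ind_{I_2^\alpha}$, which is exactly part ii) of Condition A($\alpha$) for $\{p_{i,j}'\}$ with the unchanged data $I_1^\alpha,I_2^\alpha,r_\alpha$. There is essentially no obstacle here; the only point deserving care is that the induction genuinely needs \emph{both} facts simultaneously (domination alone does not pass through a composition), so I would be sure to state the monotonicity of $F_\alpha'$ explicitly rather than leaving it implicit.
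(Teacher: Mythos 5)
Your proof is correct; in fact the paper states this lemma with no proof at all (it introduces it as a source of ``trivial examples''), and your argument---the index set in the definition of $F_\alpha$ is purely geometric, so $F'_\alpha f \geq F_\alpha f$ pointwise on nonnegative $f$, combined with monotonicity of $F'_\alpha$ in $f$ and induction to get $(F'_\alpha)^{r_\alpha}\ind_{I_1^\alpha} \geq F_\alpha^{r_\alpha}\ind_{I_1^\alpha} \geq 2\,\ind_{I_2^\alpha}$ with the same witnesses $I_1^\alpha, I_2^\alpha, r_\alpha$---is exactly the intended justification. Your remark that domination alone does not pass through composition, so monotonicity must be invoked explicitly in the induction, is a worthwhile precision that the paper leaves entirely implicit.
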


So, nontrivial examples should have at least some $p_{i,j}\leq M^{-1}$. A natural class of examples is motivated by the work of Dekking and Meester \cite{DM} and by the question of the anonymous referee.

\begin{lemma}\label{216}
Let $M=3$. Let $p_{1,1}=p_0$ and let all the other $p_{i,j}=p$. Then if
\[
p>\max\left(\frac 13, \frac {1-p_0} 2\right)
\]
then Condition A($\alpha$) is satisfied for all $\alpha\in \mathcal{D}$.
\end{lemma}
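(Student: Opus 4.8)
The plan is to verify Condition B($\alpha$) for every $\alpha\in\mathcal D$ and then invoke Proposition~\ref{prop:ba}. As in the proof of Proposition~\ref{prop:alpha}, I would take as test function the chord-length function $f_\alpha(x)=|\ell^\alpha(x)\cap K|$, which is continuous on $\Delta_\alpha$ and vanishes exactly at its two endpoints. The first step is to compute $F_\alpha f_\alpha$ explicitly. Since $\varphi_{i,j}$ contracts by $1/3$ and preserves directions, one checks (exactly as in the uniform case) that $f_\alpha\circ\psi_{\alpha,i,j}(x)=3\,|\ell^\alpha(x)\cap K_{i,j}|$; because the omitted squares contribute zero length and the nine squares $K_{i,j}$ tile $K$,
\[
F_\alpha f_\alpha(x)=3\sum_{i,j}p_{i,j}\,|\ell^\alpha(x)\cap K_{i,j}| = 3p\,f_\alpha(x)+3(p_0-p)\,|\ell^\alpha(x)\cap K_{1,1}|,
\]
where only the central square $K_{1,1}$ carries the exceptional probability $p_0$.

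The crux is the purely geometric estimate $|\ell^\alpha(x)\cap K_{1,1}|\le \tfrac13\, f_\alpha(x)$ for every $x\in\Delta_\alpha$. I would prove it as follows. Fix $\alpha$ and, for a line in direction $\alpha$ at signed perpendicular offset $t$ from the center $O=(1/2,1/2)$ of $K$, write $L(t)$ for the length of its intersection with $K$. Since $K$ is centrally symmetric about $O$, the function $L$ is even; and since $K$ is convex, $L$ is concave on its support (given two chords with endpoints $a_1,b_1$ and $a_2,b_2$, convexity places $\tfrac12(a_1+a_2)$ and $\tfrac12(b_1+b_2)$ in $K$, and their distance equals $\tfrac12(L(t_1)+L(t_2))$). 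An even concave function is non-increasing in $|t|$. Now the central square is the image of $K$ under the homothety $h$ of ratio $3$ centered at $O$: indeed $h([1/3,2/3]^2)=[0,1]^2$. Hence, if a line $\ell$ in direction $\alpha$ meets $K_{1,1}$ in a segment $\sigma$ of length $c$, then $h(\sigma)\subset K$ is a segment in direction $\alpha$ of length $3c$ lying on a line of offset $3t$ when $\ell$ has offset $t$. Thus $L(3t)\ge 3c$, and since $|t|\le|3t|$ the monotonicity of $L$ gives $f_\alpha(x)=L(t)\ge L(3t)\ge 3c$, which is the claimed bound.

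Finally I would combine the two computations. If $p_0\ge p$ the correction term is nonnegative, so $F_\alpha f_\alpha\ge 3p\,f_\alpha=(1+(3p-1))f_\alpha$ with $3p-1>0$ because $p>1/3$. If instead $p_0<p$, the geometric estimate bounds the now-negative correction by $3(p-p_0)\cdot\tfrac13 f_\alpha=(p-p_0)f_\alpha$, whence
\[
F_\alpha f_\alpha(x)\ge \big(3p-(p-p_0)\big)f_\alpha(x)=(2p+p_0)\,f_\alpha(x),
\]
and $2p+p_0-1>0$ precisely because $p>(1-p_0)/2$. In either case Condition B($\alpha$) holds with $\varepsilon=\min(3p,\,2p+p_0)-1>0$, and Proposition~\ref{prop:ba} then yields Condition A($\alpha$) for all $\alpha\in\mathcal D$.

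I expect the chord-length inequality of the second paragraph to be the only genuinely nontrivial point: everything else is bookkeeping inherited from Propositions~\ref{prop:ba} and~\ref{prop:alpha}. The delicate part there is that the homothety maps $\ell$ to a \emph{parallel} line rather than to $\ell$ itself, so the concavity-plus-central-symmetry argument (showing the chord through the more central line is the longer one) is what makes the comparison go through; one should also check that the two hypotheses $p>1/3$ and $p>(1-p_0)/2$ are exactly the two inequalities $3p>1$ and $2p+p_0>1$ that surface in the final case analysis.
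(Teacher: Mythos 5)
Your proposal is correct and takes essentially the same approach as the paper: the paper's proof of Lemma \ref{216} consists of the single assertion that Condition B($\alpha$) holds for the same chord-length function $f_\alpha$ used in Proposition \ref{prop:alpha}, and your computation $F_\alpha f_\alpha = 3p\,f_\alpha + 3(p_0-p)\,|\ell^\alpha(x)\cap K_{1,1}|$ together with the homothety-plus-concavity bound $|\ell^\alpha(x)\cap K_{1,1}|\le \tfrac13 f_\alpha(x)$ is precisely the verification the paper leaves to the reader ("one can check"). Your case analysis yielding $3p>1$ and $2p+p_0>1$ matches the stated hypothesis exactly, so nothing is missing.
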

\begin{proof}
One can check that the Condition B($\alpha$) is satisfied for the same function $f_\alpha$ as in the proof of Proposition \ref{prop:alpha}.
\end{proof}

In the case $p_0=0$ we get the random Sierpi\'nski carpet and the Condition (A) is satisfied if $p>1/2$. Note that the bounds in Lemma \ref{216} are sharp: for $p_0\leq 1/3$ and $p\leq (1-p_0)/2$ the horizontal and vertical projections of $E$ almost surely contain no intervals, by Falconer and Grimmett \cite{FG}.

\section{Projections in many directions, proof of Theorem \ref{thm:many}}

We restrict ourself to one such range $J=[\alpha_-,\alpha_+]$. Let $I_1, I_2$ and $r$ are such that Condition A($\alpha$) holds for all $\alpha\in J$. Let $\delta$ be the Hausdorff distance between $I_1$ and $I_2$.


Another simple robustness-related geometric observation: assume $x,y\in \Delta_\alpha$ and the distance between them is at most $\delta M^{-nr}/3$. Assume $\alpha, \beta\in J$ and $|\alpha-\beta|\leq \delta M^{-nr}/3$. Assume that $(\underline{i}_{nr}, \underline{j}_{nr}) \in D_{nr}(x, I_1, \alpha)$. Then $(\underline{i}_{nr}, \underline{j}_{nr}) \in D_{nr}(y, I_2, \beta)$. We can write this as

\begin{equation} \label{g}
G_\beta^{(r)} \ind_{I_2}(\psi_{\underline{i}_{nr}, \underline{j}_{nr}}(y)) \geq G_\alpha^{(r)} \ind_{I_1}(\psi_{\underline{i}_{nr}, \underline{j}_{nr}}(x)).
\end{equation}

We are now starting the proof. Compare the proof of Theorem \ref{thm:ortho}.
Given $n$, let $X_n$ be a $\delta M^{-nr}/3$-dense finite subset of $I_1$ and let $Y_n$ be a $\delta M^{-nr}/3$-dense finite subset of $J$. We choose them in such a way that

\[
\sharp (X_n\times Y_n) \leq c M^{2nr}.
\]

For any $(x, \theta)\in I_1\times J$, let us define a sequence of random variables

\[
V_n(x, \theta) = \sharp\{(\underline{i}_{nr}, \underline{j}_{nr})\in \mathcal{E}_{nr}\cap D_{nr}(x, I_1, \theta)\}.
\]
We will prove that with positive probability $V_n(x, \theta)\geq (3/2)^n$ for all $n, x, \theta$, estimating inductively the probability that this event holds up to time $(n+1)$ conditioned on the assumption that it holds at time $n$. For $n=0$ this event holds with probability 1. Let us start the inductive step.

1. Given $(y, \kappa) \in X_{n+1}\times Y_{n+1}$, let $Z(y, \kappa)$ be the set of points  from $I_1\times J$ such that
$x$ is $\delta M^{-(n+1)r}/3$-close to $y$ and $\theta$ is $\delta M^{-(n+1)r}/3$-close to $\kappa$. The sets $Z(y,\kappa)$ cover $I_1\times J$.

By the inductive assumption, $V_n(y, \kappa) \geq (3/2)^n$. Hence, we know that there are at least $(3/2)^n$ pairs $(\underline{i}_{nr}, \underline{j}_{nr})\in \mathcal{E}_{nr} \cap D_{nr}(y, I_2, \kappa)$.

2. For each square $K_{\underline{i}_{nr}, \underline{j}_{nr}}$ such that $(\underline{i}_{nr}, \underline{j}_{nr})\in \mathcal{E}_{nr} \cap D_{nr}(x, I_1, \theta)$, we want to calculate the number of its subsquares $K_{\underline{i}_{(n+1)r}, \underline{j}_{(n+1)r}}$ such that $(\underline{i}_{(n+1)r}, \underline{j}_{(n+1)r})\in \mathcal{E}_{(n+1)r} \cap D_{(n+1)r}(x, I_2, \theta)$. This random number is given by $G_\theta^{(r)} \ind_{I_2}(\psi_{\underline{i}_{nr}, \underline{j}_{nr}}(x))$ and by \eqref{g}

\begin{equation} \label{g2}
G_\theta^{(r)} \ind_{I_2}(\psi_{\underline{i}_{nr}, \underline{j}_{nr}}(x)) \geq G_\kappa^{(r)} \ind_{I_1}(\psi_{\underline{i}_{nr}, \underline{j}_{nr}}(y)).
\end{equation}
Like before, this random variable is bounded (independently of $n$) and its expected value is at least 2. Moreover, those random variables coming from different $(\underline{i}_{(n+1)r}, \underline{j}_{(n+1)r})$ are independent.

3. An important note: the bound in equation \eqref{g2} works for all $(x,\theta) \in Z(y, \kappa)$. That means that we only need to check the behaviour of this random variable for finitely many pairs $(y, \kappa)$ to prove the inductive step at all $(x, \theta)$.

4. By Azuma-Hoeffding inequality the conditional probability that

\[
\sum_{\underline{i}_{nr}, \underline{j}_{nr} \in \mathcal{E}_{nr} \cap D_{nr}(y, I_2, \kappa)} G_\kappa^{(r)} \ind_{I_1}(\psi_{\underline{i}_{nr}, \underline{j}_{nr}}(y)) < (3/2)^{n+1}
\]

 conditioned on $V_n(y, \kappa) \geq (3/2)^n$ is not greater than $\gamma^{(3/2)^n}$ for some fixed $\gamma\in (0,1)$. As the number of possible pairs $(y,\kappa)$ is at most $cM^{2nr}$, which  is increasing only exponentially fast, we are done.

\section{Nonlinear projections, proof of Theorem \ref{thm:nonlin}}

\subsection{Almost linear projections}

Let us consider carefully what are the real assumptions of the proof of Theorem \ref{thm:many}. Consider a family of projections $S_t: K\to \Delta$ parametrized by $t\in T$. A convenient way will be to write

\begin{equation} \label{eqn:st}
S_t(x)=\Pi_{\alpha_t(x)}(x)
\end{equation}
for all $x\in K$. What assumptions about $\alpha_t$ we would need for the proof from previous section to work?

We want to use Condition A. So, our first necessary assumption is that for some range $J$ in which Condition A holds (for some fixed $I_1, I_2, r$), $\alpha_t(x)\in J$ for all $t,x$. Let $\delta$ be, like before, the Hausdorff distance between $I_1$ and $I_2$.

We want also the following robustness property. For any $n$ we want to be able to divide $I_1\times T$ into a finite family of subsets $\{X_i \times Z_j\}$ and in each $X_i \times Z_j$ we want to choose a special pair $(x_i,t_j) \in X_i \times Z_j$ such that for any $(x,t)\in X_i\times Z_j$ and for any $\underline{i}_{nr}, \underline{j}_{nr}$,

\begin{equation*} \label{eee}
x_i\in S_{t_j} \circ \varphi_{\underline{i}_{nr}, \underline{j}_{nr}}(I_1) \implies x\in \Pi_{\alpha_t(X_{\underline{i}_{nr}, \underline{j}_{nr}})} \circ \varphi_{\underline{i}_{nr}, \underline{j}_{nr}}(I_2),
\end{equation*}
where $X_{\underline{i}_{nr}, \underline{j}_{nr}}$ is the center of $K_{\underline{i}_{nr}, \underline{j}_{nr}}$. This will let us proceed with the inductive part of the argument.

Finally, we need the size of the family $\{Z_i\}$ to grow only exponentially fast with $n$, so that we can apply the large deviation argument and the resulting infinite product is convergent.

\begin{definition} \label{def:alfop}
We say that a family $\{S_t\}_{t\in T}:K\to \Delta$ is an {\bf almost linear family of projections} if the following properties are satisfied. We use notation from \eqref{eqn:st}. We set $J\subset \mathcal{D}$ as the range of angles for which Condition A($\alpha$) is satisfied with the same $I_1, I_2, r$. We denote by $\delta$ the Hausdorff distance between $I_1$ and $I_2$.

\begin{itemize}
\item[i)] $\alpha_t(x)\in J$ for all $t\in T$ and $x\in K$. In particular, $\alpha_t(x)$ is contained in one of two components of $\mathcal{D}$.
\item[ii)] $\alpha_t(x)$ is a Lipschitz function of $x$, with the Lipschitz constant not greater than $\delta/3$. This guarantees in particular that $S_t(K_{\ien, \jen})$ is an interval.
\item[iii)] For any $n$ we can divide $T$ into subsets $Z_i^{(n)}$ such that whenever $t,s\in Z_i^{(n)}$ and $x,y\in K_{\ien, \jen}$, we have
    \[
    |\alpha_t(x)-\alpha_s(y)| \leq \delta M^{-n}/3.
    \]
Moreover, we can do that in such a way that $\sharp \{Z_i^{(n)}\}$ grows only exponentially fast with $n$.
\end{itemize}
\end{definition}

Then the proof of Theorem \ref{thm:many} easily yields the following.

\begin{theorem} \label{thm:general}
Let $\{S_t\}_{t\in T}$ be an almost linear family of projections. Then for almost all nonempty realizations $E$ of the percolation fractal, $S_t(E)$ contains an interval for all $t\in T$.
\end{theorem}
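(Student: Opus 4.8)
The plan is to reduce the proof of Theorem \ref{thm:general} to a careful re-reading of the proof of Theorem \ref{thm:many}, checking that each of the three properties in Definition \ref{def:alfop} supplies exactly the ingredient that the linear argument used. First I would fix one almost linear family $\{S_t\}_{t\in T}$ and recall that by property i) there is a single range $J$ and fixed data $I_1, I_2, r$ for which Condition A($\alpha$) holds for all angles $\alpha\in J$; this is the analogue of restricting attention to one interval $J=[\alpha_-,\alpha_+]$ in the previous section. The strategy is again to apply the statistical self-similarity reduction of Section 3: it suffices to show that $S_t(E)\supset I_1$ (more precisely, that $S_t$ covers the relevant central interval) with probability uniformly bounded away from zero, and then the independence over disjoint subsquares upgrades this to an almost sure statement for all $t$ simultaneously.

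The core of the argument is the inductive large-deviation scheme. For $(x,t)\in I_1\times T$ I would define the random variables
\[
V_n(x,t) = \sharp\{(\underline{i}_{nr}, \underline{j}_{nr})\in \mathcal{E}_{nr} : x\in S_t\circ\varphi_{\underline{i}_{nr}, \underline{j}_{nr}}(I_1)\},
\]
which is the exact analogue of $V_n(x,\theta)$ from the proof of Theorem \ref{thm:many}, and aim to prove that with positive probability $V_n(x,t)\geq (3/2)^n$ for all $n$, all $x\in I_1$, and all $t\in T$. The inductive step proceeds exactly as before: conditioned on $V_n\geq (3/2)^n$ at level $n$, I count the subsquares at level $(n+1)r$ whose image under $S_t$ still covers the point, this count is a sum of at least $(3/2)^n$ independent bounded random variables each of expectation at least $2$ (this is where Condition A($\alpha$) with the fixed data enters), and the Azuma--Hoeffding inequality gives a conditional failure probability bounded by $\gamma^{(3/2)^n}$ for some $\gamma\in(0,1)$. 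The infinite product $\prod_n (1-\gamma^{(3/2)^n})^{N_n}$ converges and stays positive provided the number of test pairs $N_n$ at level $n$ grows only exponentially.

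The role of properties ii) and iii) is precisely to make the nonlinearity harmless and to control $N_n$. Property ii), the Lipschitz bound $\delta/3$ on $x\mapsto\alpha_t(x)$, guarantees that $S_t$ restricted to any level-$nr$ square is a genuine interval and that the angle does not vary by more than a controlled amount across a square of size $M^{-nr}$; combined with property iii), which partitions $T$ into pieces $Z_i^{(n)}$ on which $\alpha_t(x)$ varies by at most $\delta M^{-nr}/3$ over a square, this reproduces the robustness inequality \eqref{g2}. Concretely, choosing $x_i$ ranging over a $\delta M^{-nr}/3$-dense net in $I_1$ and $t_j$ one representative per $Z_i^{(n)}$, the implication in Definition \ref{def:alfop} forces that if the special pair $(x_i,t_j)$ is covered by the central interval $\Pi_\alpha\circ\varphi(I_1)$ then every $(x,t)$ in the same cell is covered by the larger $\Pi_\alpha\circ\varphi(I_2)$; hence I only need to verify the inductive step for the finitely many pairs $(x_i,t_j)$, and the total count $N_n$ is the product of the net cardinality $O(M^{nr})$ with $\sharp\{Z_i^{(n)}\}$, which by the exponential-growth clause of iii) remains exponential in $n$. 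This is exactly the convergence condition the large-deviation product needs.

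The step I expect to be the main obstacle is the robustness bookkeeping in the inductive step: one must verify that the geometric implication in property iii) of Definition \ref{def:alfop} genuinely upgrades a hit of $I_1$ at the representative pair into a hit of $I_2$ throughout the cell, uniformly over all level-$(n+1)r$ squares, so that the Azuma--Hoeffding estimate applies to the \emph{same} random variable for every $(x,t)$ in a cell. Once one is confident that the two perturbations --- moving the base point within a net cell and moving the parameter within $Z_i^{(n)}$, each by at most $\delta M^{-nr}/3$ and with the Lipschitz angle variation of ii) contributing a third $\delta M^{-nr}/3$ --- together stay within the $\delta$-collar separating $I_1$ from $I_2$, the remainder is a verbatim repetition of the proof of Theorem \ref{thm:many}. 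Thus the honest content of the theorem is entirely contained in Definition \ref{def:alfop}, and the proof amounts to matching each definitional clause to its use; I would phrase the write-up as ``the proof of Theorem \ref{thm:many} applies \emph{mutatis mutandis}'' after explicitly pointing out these three correspondences.
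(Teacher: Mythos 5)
Your proposal is correct and follows essentially the same route as the paper's own proof: the paper likewise reruns the inductive large-deviation scheme of Theorem \ref{thm:many}, with clause i) of Definition \ref{def:alfop} supplying the fixed Condition A data $(I_1,I_2,r)$, clauses ii) and iii) supplying exactly the robustness bound (the three $\delta M^{-(n+1)r}/3$ contributions you identify) and the exponentially growing cover of $I_1\times T$ by cells $B_{\delta M^{-(n+1)r}/3}(x_i)\times Z_j^{((n+1)r)}$ with the uniform lower bound $G^{(r)}_{\alpha_t(X_{\underline{i}_{nr},\underline{j}_{nr}})}\ind_{I_1}(\psi_{\underline{i}_{nr},\underline{j}_{nr}}(x_i))$ at one representative per cell, and the Azuma--Hoeffding inequality closing the induction. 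The only differences are cosmetic bookkeeping (you define $V_n(x,t)$ via hits of $I_1$ where the paper uses $I_2$, and you make explicit the Section 3 self-similarity upgrade from positive probability to almost sure, which the paper leaves implicit at this point).
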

\begin{proof}

We denote by $V_n(x,t)$ the number of pairs $(\underline{i}_{nr}, \underline{j}_{nr})\in \mathcal{E}_{nr}$ for which $x\in S_t\circ \varphi_{\underline{i}_{nr}, \underline{j}_{nr}}(I_2)$. We want to prove inductively that (with positive probability) $V_n(x,t) \geq (3/2)^n$ for all $x\in I_1, t\in T$. The statement is obvious for $n=0$. The inductive step is as follows.

1. We choose in $I_1$ a $\delta M^{-(n+1)r}/3$-dense finite subset $X_{n+1}$. We can cover $I_1\times T$ with sets $B_{\delta M^{-(n+1)r}/3}(x_i) \times Z_j^{((n+1)r)}$, $x_i\in X_{n+1}$. The inductive assumption says that for any $(y,s)$ there are at least $(3/2)^n$ pairs $(\underline{i}_{nr}, \underline{j}_{nr})\in \mathcal{E}_{nr}$ such that $y\in S_s\circ\varphi_{\underline{i}_{nr}, \underline{j}_{nr}}(I_2)$.

2. For each $K_{\underline{i}_{nr}, \underline{j}_{nr}}, (\underline{i}_{nr}, \underline{j}_{nr})$ as above, we want to estimate from below the number of its subsquares $K_{\underline{i}_{(n+1)r}, \underline{j}_{(n+1)r}}$ such that $y\in S_s\circ\varphi_{\underline{i}_{(n+1)r}, \underline{j}_{(n+1)r}}(I_2)$. For all $(y,s)\in B_{\delta M^{-(n+1)r}/3}(x_i) \times Z_j^{((n+1)r)}$ this random variable can be uniformly estimated from below by

\[
G_{\alpha_t(X_{\underline{i}_{nr}, \underline{j}_{nr}})}^{(r)} \ind_{I_1}(\psi_{\underline{i}_{nr}, \underline{j}_{nr}}(x_i),
\]
where $t\in Z_j^{((n+1)r)}$ is arbitrary.

3. As we approximate the almost linear projection by a linear one, we can apply Condition A($\alpha_t(X_{\underline{i}_{nr}, \underline{j}_{nr}})$).

4. As the number of sets $B_{\delta M^{-(n+1)r}/3}(x_i) \times Z_j^{((n+1)r)}$ grows only exponentially fast with $n$, we finish the proof using Azuma-Hoeffding inequality, like before.
\end{proof}

\subsection{Radial and co-radial projections}

Families of radial and co-radial projections are not in general almost linear families of projections. However, as explained in section 3, we only need to consider radial/co-radial projections with center in uniformly nonhorizontal, nonvertical direction and arbitrarily big distance from $K$. If we fix any nonhorizontal and nonvertical direction and consider only centers in sufficiently large distance, the resulting family of radial projections and family of co-radial projections will satisfy conditions ii), iii) of Definition \ref{def:alfop}. To have the condition i) satisfied as well, we only need to subdivide the family. Hence, Theorem \ref{thm:nonlin} follows from Theorem \ref{thm:general}.


\end{document}